\documentclass{article}
\usepackage[T1]{fontenc}
\usepackage{a4wide}
\usepackage{lipsum}
\usepackage{hyperref}

\title{On 3-Connected Cubic Planar Graphs and their Strong Embeddings on Orientable Surfaces}

\author{
Meike Weiß\thanks{Chair of Algebra and Representation Theory, RWTH Aachen University, Germany. E-mail: {\tt weiss@art.rwth-aachen.de}.}
\and
Reymond Akpanya\thanks{Chair of Algebra and Representation Theory, RWTH Aachen University, Germany. E-mail: {\tt akpanya@art.rwth-aachen.de}.}
\thanks{School of Mathematics and Statistics, The University of Sydney, Carslaw Building F07,
Camperdown NSW 2006, Australia. E-mail: {\tt reymond.akpanya@sydney.edu.au}.}
\and
Alice C.\ Niemeyer\thanks{Chair of Algebra and Representation Theory, RWTH Aachen University, Germany. E-mail: {\tt alice.niemeyer@art.rwth-aachen.de}.}
}

\date{}

\makeatletter

\usepackage{amsthm,amsmath,amssymb}
\usepackage{float}
\usepackage{cleveref}

\usepackage{cleveref}
\DeclareMathOperator{\Aut}{Aut}

\newtheorem{theorem}{Theorem}[section]
\newtheorem*{theorem*}{Theorem}
\newtheorem{lemma}[theorem]{Lemma}
\newtheorem{definition}[theorem]{Definition}

\newtheorem{corollary}[theorem]{Corollary}
\newtheorem*{corollary*}{Corollary}
\newtheorem*{conjecture*}{Conjecture}

\newtheorem{remark}[theorem]{Remark}
\newtheorem{proposition}[theorem]{Proposition}
\newtheorem*{question*}{Question}
\usepackage{subcaption}
\DeclareMathOperator{\C4C}{C4C}
\pagestyle{empty}

\usepackage{amsmath, amssymb, amsfonts} 

\usepackage{ifthen}

\usepackage{tikz}
\usetikzlibrary{calc}
\usetikzlibrary{positioning}
\usetikzlibrary{shapes}
\usetikzlibrary{patterns}

\makeatletter
\edef\texforht{TT\noexpand\fi
  \@ifpackageloaded{tex4ht}
    {\noexpand\iftrue}
    {\noexpand\iffalse}}
\makeatother

\makeatletter
\newif\iftikz@node@phantom
\tikzset{
  phantom/.is if=tikz@node@phantom,
  text/.code=%
    \edef\tikz@temp{#1}%
    \ifx\tikz@temp\tikz@nonetext
      \tikz@node@phantomtrue
    \else
      \tikz@node@phantomfalse
      \let\tikz@textcolor\tikz@temp
    \fi
}
\usepackage{etoolbox}
\patchcmd\tikz@fig@continue{\tikz@node@transformations}{%
  \iftikz@node@phantom
    \setbox\pgfnodeparttextbox\hbox{}
  \fi\tikz@node@transformations}{}{}
\makeatother

\newcommand{\tikzAngleOfLine}{\tikz@AngleOfLine}
\def\tikz@AngleOfLine(#1)(#2)#3{%
  \pgfmathanglebetweenpoints{%
    \pgfpointanchor{#1}{center}}{%
    \pgfpointanchor{#2}{center}}
  \pgfmathsetmacro{#3}{\pgfmathresult}%
}

%
%
%

\tikzset{ 
    vertexNodePlain/.style = {fill=#1, shape=circle, inner sep=0pt, minimum size=2pt, text=none},
    vertexNodePlain/.default=gray,
    vertexPlain/labels/.style = {
        vertexNode/.style={vertexNodePlain=##1},
        vertexLabel/.style={gray}
    },
    vertexPlain/nolabels/.style = {
        vertexNode/.style={vertexNodePlain=##1},
        vertexLabel/.style={text=none}
    },
    vertexPlain/.style = vertexPlain/#1,
    vertexPlain/.default=labels
}
\tikzset{
    vertexNodeNormal/.style = {fill=#1, shape=circle, inner sep=0pt, minimum size=4pt, text=none},
    vertexNodeNormal/.default = blue,
    vertexNormal/labels/.style = {
        vertexNode/.style={vertexNodeNormal=##1},
        vertexLabel/.style={blue}
    },
    vertexNormal/nolabels/.style = {
        vertexNode/.style={vertexNodeNormal=##1},
        vertexLabel/.style={text=none}
    },
    vertexNormal/.style = vertexNormal/#1,
    vertexNormal/.default=labels
}
\tikzset{
    vertexNodeBallShading/pdf/.style = {ball color=#1},
    vertexNodeBallShading/svg/.style = {fill=#1},
    vertexNodeBallShading/.code = {
        \if\texforht
            \tikzset{vertexNodeBallShading/svg=#1!90!black}
        \else
            \tikzset{vertexNodeBallShading/pdf=#1}
        \fi
    },
    vertexNodeBall/.style = {shape=circle, vertexNodeBallShading=#1, inner sep=2pt, outer sep=0pt, minimum size=3pt, font=\tiny},
    vertexNodeBall/.default = white,
    vertexBall/labels/.style = {
        vertexNode/.style={vertexNodeBall=##1, text=black},
        vertexLabel/.style={text=none}
    },
    vertexBall/nolabels/.style = {
        vertexNode/.style={vertexNodeBall=##1, text=none},
        vertexLabel/.style={text=none}
    },
    vertexBall/.style = vertexBall/#1,
    vertexBall/.default=labels
}
\tikzset{ 
    vertexStyle/.style={vertexNormal=#1},
    vertexStyle/.default = labels
}

\newcommand{\vertexLabelR}[4][]{
    \ifthenelse{ \equal{#1}{} }
        { \node[vertexNode] at (#2) {#4}; }
        { \node[vertexNode=#1] at (#2) {#4}; }
    \node[vertexLabel, #3] at (#2) {#4};
}
\newcommand{\vertexLabelA}[4][]{
    \ifthenelse{ \equal{#1}{} }
        { \node[vertexNode] at (#2) {#4}; }
        { \node[vertexNode=#1] at (#2) {#4}; }
    \node[vertexLabel] at (#3) {#4};
}

\newcommand{\edgeLabelColor}{blue!20!white}
\tikzset{
    edgeLineNone/.style = {draw=none},
    edgeLineNone/.default=black,
    edgeNone/labels/.style = {
        edge/.style = {edgeLineNone=##1},
        edgeLabel/.style = {fill=\edgeLabelColor,font=\small}
    },
    edgeNone/nolabels/.style = {
        edge/.style = {edgeLineNone=##1},
        edgeLabel/.style = {text=none}
    },
    edgeNone/.style = edgeNone/#1,
    edgeNone/.default = labels
}
\tikzset{
    edgeLinePlain/.style={line join=round, draw=#1},
    edgeLinePlain/.default=black,
    edgePlain/labels/.style = {
        edge/.style={edgeLinePlain=##1},
        edgeLabel/.style={fill=\edgeLabelColor,font=\small}
    },
    edgePlain/nolabels/.style = {
        edge/.style={edgeLinePlain=##1},
        edgeLabel/.style={text=none}
    },
    edgePlain/.style = edgePlain/#1,
    edgePlain/.default = labels
}
\tikzset{
    edgeLineDouble/.style = {very thin, double=#1, double distance=.8pt, line join=round},
    edgeLineDouble/.default=gray!90!white,
    edgeDouble/labels/.style = {
        edge/.style = {edgeLineDouble=##1},
        edgeLabel/.style = {fill=\edgeLabelColor,font=\small}
    },
    edgeDouble/nolabels/.style = {
        edge/.style = {edgeLineDouble=##1},
        edgeLabel/.style = {text=none}
    },
    edgeDouble/.style = edgeDouble/#1,
    edgeDouble/.default = labels
}
\tikzset{
    edgeStyle/.style = {edgePlain=#1},
    edgeStyle/.default = labels
}

%
\newcommand{\faceColorY}{yellow!60!white}   
\newcommand{\faceColorB}{blue!60!white}     
\newcommand{\faceColorC}{cyan!60}           
\newcommand{\faceColorR}{red!60!white}      
\newcommand{\faceColorG}{green!60!white}    
\newcommand{\faceColorO}{orange!50!yellow!70!white} 

\newcommand{\faceColor}{\faceColorY}
\newcommand{\faceColorSwap}{\faceColorC}


\tikzset{
    face/.style = {fill=#1},
    face/.default = \faceColor,
    faceY/.style = {face=\faceColorY},
    faceB/.style = {face=\faceColorB},
    faceC/.style = {face=\faceColorC},
    faceR/.style = {face=\faceColorR},
    faceG/.style = {face=\faceColorG},
    faceO/.style = {face=\faceColorO}
}
\tikzset{
    faceStyle/labels/.style = {
        faceLabel/.style = {}
    },
    faceStyle/nolabels/.style = {
        faceLabel/.style = {text=none}
    },
    faceStyle/.style = faceStyle/#1,
    faceStyle/.default = labels
}
\tikzset{ face/.style={fill=#1} }
\tikzset{ faceSwap/.code=
    \ifdefined\swapColors
        \tikzset{face=\faceColorSwap}
    \else
        \tikzset{face=\faceColor}
    \fi
}

\usepackage{hyperref}

\begin{document}
\thispagestyle{empty}
\maketitle

\begin{abstract}
Although the strong embedding of a $3$-connected planar graph $G$ on the sphere is unique, $G$ can have different inequivalent strong embeddings on a surface of positive genus. 
If $G$ is cubic, then the strong embeddings of $G$ on the projective plane, the torus and the Klein bottle each are in one-to-one correspondence with certain subgraphs of the dual graph $G^\ast$. 
Here, we exploit this characterisation and show that two strong embeddings of $G$ on the projective plane, the torus or the Klein bottle are isomorphic if and only if the corresponding subgraphs of $G^{\ast}$ are contained in the same orbit under $\Aut(G^{\ast})$.
This allows us to construct a data base containing all isomorphism classes of strong embeddings on the projective plane, the torus and the Klein bottle of all $3$-connected cubic planar graphs with up to $22$ vertices. 
Moreover, we establish that cyclically 4-edge connected cubic planar graphs can be strongly embedded on orientable surfaces of positive genera. We use this to show that a 3-connected cubic planar graph has no strong embedding on orientable surfaces of positive genera if and only if it is the dual of an Apollonian network.
\end{abstract}
\section{Introduction}\label{section:Introduction}
To lay the foundation for our investigation, we adopt the standard terminology of topological graph theory, as presented in~\cite{TopologicalGraphTheory,GraphsOnSurfaces}.
A central aspect of topological graph theory is the study of graphs embedded on surfaces.
In this work, we focus on strong graph embeddings, i.e.\ embeddings such that a given graph is drawn on a surface so that every cell is bounded by a cycle.
A long-standing open problem that arises in the study of strong graph embeddings is the \textbf{oriented strong embedding conjecture}, see \cite{CycleDoubleCoverConjecture}.

\begin{conjecture*}  Every $2$-connected graph has a strong embedding on some orientable surface.
\end{conjecture*}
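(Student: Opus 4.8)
The plan is to attack this through the standard reduction to bridgeless cubic graphs combined with the correspondence between orientable strong embeddings and oriented cycle double covers. First I would argue that it suffices to treat $2$-connected \emph{cubic} graphs: a vertex of degree $d \geq 4$ can be blown up into a small cubic gadget, and degree-$2$ vertices suppressed, in such a way that an orientable strong embedding of the resulting cubic graph pulls back to one of the original. These are the reductions familiar from the cycle double cover literature, and $2$-connectivity is exactly what guarantees bridgelessness, without which no family of face-bounding cycles can exist.

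The second step is to make the topological reformulation precise. An orientable strong embedding of $G$ determines, via its face boundaries, a family $\mathcal{C}$ of cycles covering every edge exactly twice, and orientability equips these cycles with consistent orientations so that each edge is traversed once in each direction; conversely, recovering an embedding from such a family requires that the cycles incident to each vertex fit together into a single rotation. Thus the task becomes the construction of an \emph{oriented} cycle double cover of the cubic graph that is moreover \emph{compatible} at every vertex, i.e.\ realizable as a rotation system. This vertex-compatibility is exactly what separates a genuine strong embedding from a bare cycle double cover, and it is the condition one must keep in sight throughout.

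For $3$-edge-colourable cubic graphs the construction is transparent. Fixing a proper $3$-edge-colouring with colours $\{1,2,3\}$, the three $2$-factors arising as the unions of colour classes $\{1,2\}$, $\{1,3\}$ and $\{2,3\}$ cover each edge exactly twice, and I would verify that at each trivalent vertex the three incident edge-pairs $(1,2)$, $(1,3)$, $(2,3)$ correspond precisely to the three face corners forced by any cyclic ordering of the three edges, so that the $2$-factor components orient and fit into a consistent orientable rotation system. Here the local picture is rigid enough that compatibility and orientability can be checked directly, settling every cubic graph admitting a Tait colouring.

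The main obstacle, and the reason the statement is stated as a conjecture rather than a theorem, is the class of \emph{snarks} — bridgeless cubic graphs with no proper $3$-edge-colouring. For these the colour-class construction is unavailable, and it is not even known in general that an oriented cycle double cover exists, let alone one meeting the vertex-compatibility needed for an embedding. I would attempt a structural induction, first reducing to cyclically $4$-edge-connected snarks (cuts of size $\leq 3$ can be contracted away, mirroring the cyclically $4$-edge-connected reduction used elsewhere in this work), and then seeding the induction with the Petersen graph and the other minimal snarks, whose orientable strong embeddings can be exhibited by hand. The honest assessment is that closing the inductive step — producing a compatible oriented cycle double cover for every snark — is the genuinely open core of the problem, a task that already contains the oriented cycle double cover conjecture, and I do not expect the strategy above to dispose of it without a substantially new idea.
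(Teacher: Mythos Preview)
The paper does \emph{not} prove this statement. It is displayed there as an open conjecture --- the \emph{oriented strong embedding conjecture} --- and is cited only as motivation; the paper's own results are confined to the much narrower class of $3$-connected cubic \emph{planar} graphs (where the sphere already supplies an orientable strong embedding, and the question is about other surfaces). So there is no proof in the paper to compare your proposal against, and your closing assessment that the strategy runs into a genuinely open problem at the snark stage is accurate and honest.

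That said, the part you describe as already settled is not. Your claim that for a $3$-edge-colourable cubic graph the three bichromatic $2$-factors assemble into an \emph{orientable} strong embedding is false in general. They do form a cycle double cover, and at each trivalent vertex the three corners $(1,2),(2,3),(1,3)$ are indeed the three transitions of some cyclic order, so you get a genuine embedding; but nothing in the construction forces that embedding to be orientable. The smallest counterexample is $K_4$: its Tait colouring produces three Hamiltonian $4$-cycles, and the resulting embedding has Euler characteristic $4-6+3=1$, i.e.\ it lives on the projective plane. Equivalently, one checks directly that those three $4$-cycles cannot all be oriented so that every edge is used once in each direction, so they are not an oriented cycle double cover. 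Thus even the ``easy'' half of your plan needs a different construction, and the reduction to snarks does not isolate the difficulty in the way you suggest.
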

This conjecture has been the focus of various studies, see \cite{ELLINGHAM2011495,bojan,richter} for example.
We observe that the above conjecture naturally holds for \textbf{planar graphs}, i.e.\ graphs that can be embedded on the plane such that any two embedded edges intersect at most in their common endpoints.
Whitney’s celebrated Unique Embedding Theorem states that a strong embedding of a $3$-connected planar graph on the sphere is unique, see \cite{Whitney}.
However, it is possible that a given $3$-connected planar graph has different inequivalent and even non-isomorphic strong embeddings on a surface of positive genus. 
Building on the work of Enami \cite{EnamiEmbeddings} and {Weiß et al.~\cite{PaperMeikeStrong} this paper investigates non-isomorphic strong embeddings of 3-connected cubic planar graphs on surfaces of positive genera. In his work,
Enami characterises certain subgraphs of the dual graph $G^\ast$ for a $3$-connected cubic planar graph $G$ that are used to define inequivalent embeddings of $G$ on the projective plane, the torus or the Klein bottle.
Based on these results, Weiß et al.\ determine those subgraphs of $G^\ast$ that lead to inequivalent strong embeddings of $G$ on the projective plane, the torus or the Klein bottle.
This allows us to prove the following theorem.

 \begin{theorem*}
The non-isomorphic strong embeddings of a $3$-connected cubic planar graph $G$ on the projective plane, the torus and the Klein bottle are in a one-to-one correspondence with certain orbits of $\Aut(G^{\ast})$ acting on the subgraphs of $G^\ast$ (see \Cref{theorem:nonIsomorphic}).
\end{theorem*}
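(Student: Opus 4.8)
The plan is to set up a correspondence between the geometric notion of isomorphism of embeddings and the combinatorial notion of orbits under $\Aut(G^\ast)$, and then push known results through this dictionary. First I would recall the characterisation, due to Enami and refined by Weiß et al., that associates to every inequivalent strong embedding of $G$ on the projective plane, the torus or the Klein bottle a distinguished subgraph $H \subseteq G^\ast$ (of a prescribed type depending on the surface), and recall that $G$ being $3$-connected planar and cubic means $G^\ast$ is a $3$-connected planar triangulation, so that by Whitney's theorem $\Aut(G^\ast)$ coincides with the automorphism group of the unique (sphere) embedding of $G^\ast$, equivalently with the combinatorial automorphisms of $G$ itself. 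I would then state precisely what ``isomorphic strong embeddings'' means: two strong embeddings $\iota_1,\iota_2$ of $G$ are isomorphic if there is a graph automorphism $\varphi\in\Aut(G)$ together with a homeomorphism of the underlying surfaces carrying $\iota_1$ to $\iota_2\circ\varphi$ (up to the usual care about orientation / the surface type being fixed).

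The core of the argument is a two-way implication. For the forward direction, I would take an isomorphism of embeddings, realised by $\varphi\in\Aut(G)=\Aut(G^\ast)$ and a surface homeomorphism; since the homeomorphism carries faces to faces and the special subgraph $H$ is defined purely in terms of the face structure (it records, roughly, which dual edges are ``doubled'' or along which the surgery producing the higher-genus embedding is performed), $\varphi$ must carry $H_1$ to $H_2$, so $H_1$ and $H_2$ lie in the same $\Aut(G^\ast)$-orbit. For the reverse direction, I would start from $\varphi\in\Aut(G^\ast)$ with $\varphi(H_1)=H_2$; because $\varphi$ is induced by a homeomorphism of the sphere (again Whitney), and the construction of the strong embedding from the pair $(G,H)$ is natural — it only uses $G$, its planar face structure, and $H$ — applying $\varphi$ commutes with the construction, yielding a homeomorphism of the two higher-genus surfaces intertwining the two embeddings. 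The one delicate point is that inequivalent embeddings and isomorphism classes differ by the action of ``switching'' symmetries (e.g.\ orientation-reversal, or swapping the two sides of a dual edge), so I would need to check that the notion of subgraph of $G^\ast$ used in \Cref{theorem:nonIsomorphic} has already been set up (in the cited work of Weiß et al.) to quotient out exactly these, so that no over- or under-counting occurs.

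The main obstacle I anticipate is precisely this bookkeeping: making sure the ``certain subgraphs'' and ``certain orbits'' in the statement are the right refinement so that the bijection is clean. Concretely, for the torus and the Klein bottle the subgraph $H$ may carry extra combinatorial data (an orientation, a choice of pairing, or a distinguished pair of curves), and two different such decorated subgraphs can give isomorphic embeddings even when their underlying subgraphs are $\Aut(G^\ast)$-inequivalent, or vice versa. I would handle this by invoking the precise classification of inequivalent embeddings from \cite{PaperMeikeStrong} and checking, case by case over the three surfaces, that the residual identifications among inequivalent embeddings are exactly induced by $\Aut(G^\ast)$ — i.e.\ that the map \{inequivalent embeddings\}$/\Aut(G^\ast)\to$\{isomorphism classes\} is well defined and injective. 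Everything else (naturality of the surgery construction, $\Aut(G)=\Aut(G^\ast)$, faces going to faces under homeomorphisms of strong embeddings) is standard topological graph theory and should go through without surprises.
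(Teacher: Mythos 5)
Your proposal is correct and follows essentially the same route as the paper: the paper's key step (its Lemma on isomorphic embeddings) shows that two twisted subgraphs $H_1,H_2$ of $G^\ast$ induce isomorphic strong embeddings if and only if they lie in the same $\Aut(G^{\ast})$-orbit, using exactly your observation that an automorphism of $G$ preserves the planar face structure and hence the rotation system, so it maps facial cycles to facial cycles precisely when it maps the twisted edge sets to each other; combining this with the cited one-to-one correspondence between inequivalent strong embeddings and the prescribed subgraphs gives the theorem. The case-by-case decoration check you anticipate is not needed, since in this framework the subgraph alone determines the signature and hence the embedding, making the argument uniform across the three surfaces.
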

  We use our theoretical results to compute a data base containing exactly one representative from each isomorphism class of strong embeddings of $3$-connected cubic planar graphs with at most $22$ vertices on the projective plane, the torus and the Klein bottle, see \cite{dataReembeddings}. Moreover, we establish the following result.
\begin{theorem*}
    There exists a family of $3$-connected cubic planar graphs such that the number of non-isomorphic strong embeddings on the torus and the Klein bottle grows exponentially in the number of vertices (see \Cref{theorem:exponential}).
 \end{theorem*}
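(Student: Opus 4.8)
The plan is to exhibit an explicit infinite family of $3$-connected cubic planar graphs $\{G_n\}$ for which a lower bound on the number of relevant $\Aut(G_n^{\ast})$-orbits of subgraphs of $G_n^{\ast}$ grows exponentially in $n$, and then invoke \Cref{theorem:nonIsomorphic} to translate this into an exponential lower bound on the number of non-isomorphic strong embeddings on the torus and the Klein bottle. A natural candidate is a ``long prism-like'' or ``ladder-like'' family: for instance take $G_n$ to be the cubic planar graph obtained from a long hexagonal strip (a linear chain of $n$ hexagonal faces, closed up into a cylinder or suitably capped so it stays $3$-connected and cubic), or the $n$-fold ``nanotube''/prism graph $C_n \times K_2$ with a fixed gadget attached at each end to kill unwanted symmetry. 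The dual $G_n^{\ast}$ is then a wheel-like or triangulated-cylinder graph, and the subgraphs of $G_n^{\ast}$ that parametrise torus and Klein-bottle embeddings (per Enami's characterisation, as carried over in the earlier results) are, roughly, choices of one or two disjoint non-contractible cycles in $G_n^{\ast}$; in the cylinder-like dual there are on the order of a constant-to-the-$n$ many such choices.

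The key steps, in order, would be: (1) fix the family $G_n$ precisely and verify it is cubic, planar, and $3$-connected for all $n$ (and, if we want the full strength, cyclically $4$-edge-connected, though that is not needed for this particular theorem); (2) identify $G_n^{\ast}$ concretely and determine $\Aut(G_n^{\ast})$ — the point of the end-gadgets is to force $\Aut(G_n^{\ast})$ to be small, e.g.\ of order bounded by an absolute constant, independent of $n$; (3) using the characterisation from Weiß et al.\ and Enami invoked above, describe the set $S_n$ of subgraphs of $G_n^{\ast}$ corresponding to strong torus (resp.\ Klein bottle) embeddings of $G_n$, and give a lower bound $|S_n| \geq c^{n}$ for some $c > 1$ by counting the relevant cycle configurations in the cylinder-like graph $G_n^{\ast}$; (4) combine (2) and (3): since each $\Aut(G_n^{\ast})$-orbit has size at most $|\Aut(G_n^{\ast})| \le M$ for an absolute constant $M$, the number of orbits is at least $|S_n|/M \geq c^{n}/M$, which is still exponential; (5) apply \Cref{theorem:nonIsomorphic} to conclude that the number of non-isomorphic strong embeddings of $G_n$ on the torus (resp.\ Klein bottle) is at least $c^{n}/M$, and note $G_n$ has $O(n)$ vertices, giving exponential growth in the number of vertices.

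The main obstacle I expect is step (3) together with step (2): one must be careful that the subgraphs being counted genuinely correspond to pairwise \emph{inequivalent} embeddings on the \emph{correct} surface (torus versus Klein bottle have different admissibility conditions on the chosen cycles — e.g.\ orientation-preserving versus orientation-reversing handles, or parity conditions), so the combinatorial count must be set up to match exactly the subgraph conditions in the cited characterisation rather than counting all non-contractible cycles naively. Controlling $\Aut(G_n^{\ast})$ is the other delicate point: the dual of a highly symmetric prism is itself highly symmetric, so the end-gadgets must be designed to break the rotational symmetry of the cylinder while keeping the graph cubic, planar and $3$-connected, and one must actually prove $|\Aut(G_n^{\ast})|$ is bounded (e.g.\ by a fixing/rigidity argument: any automorphism must fix each end-gadget setwise, hence fix a reference face, hence act trivially on the linear structure). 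Once these are in hand, the passage from ``exponentially many subgraphs'' to ``exponentially many orbits'' is immediate because orbit sizes are uniformly bounded, and the final translation to embeddings is just an application of the already-established correspondence theorem.
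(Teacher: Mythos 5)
Your overall architecture matches the paper's: pick a family, control $\Aut(G_n^{\ast})$, count admissible subgraphs of the dual, divide by the maximal orbit size, and invoke \Cref{theorem:nonIsomorphic}. The genuine gap is in your step (3): you have not identified where the exponential count actually comes from, and most of the candidate families you float cannot supply it. By \Cref{theorem:reembedding}, the subgraphs of $G^{\ast}$ that parametrise strong torus and Klein-bottle embeddings are $K_{2,2,2}$, $A_3$, $A_5$, $A_6$ and $K_{2,m}$ (with the two vertices of the small part non-adjacent in $G^{\ast}$) --- they are not ``choices of one or two disjoint non-contractible cycles''. The first four types have bounded size, so in a dual of bounded maximum degree (which is what a hexagonal strip or nanotube gives you: every face of $G_n$ has bounded length, hence $G_n^{\ast}$ is a bounded-degree triangulation) each occurs only $O(n)$ times; and $K_{2,m}$ with $m$ growing linearly in $n$ cannot occur at all there, since it requires two vertices of degree at least $m$. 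Such families therefore admit only polynomially many inequivalent strong embeddings on these two surfaces, and no amount of symmetry-breaking by end-gadgets repairs that.

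The family that does work --- and is essentially the one the paper uses --- is the prism you mention in passing, exploited through the fact that the dual of the $2n$-prism is the bipyramid over $C_{2n}$: two non-adjacent apexes each adjacent to every vertex of a $2n$-cycle. Every $m$-subset of the rim together with the two apexes then spans an admissible $K_{2,m}$, giving $\binom{2n}{m}$ admissible subgraphs for each $m$, hence exponentially many for $m$ near $n$ of the appropriate parity (even for the torus, odd for the Klein bottle). The paper then makes the automorphism group of the (modified) dual trivial by subdividing three carefully chosen triangular faces and verifies triviality by a degree argument, so every subgraph is its own orbit; your weaker ``divide by a bounded $\lvert\Aut\rvert$'' step would also suffice. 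But the exponential supply of admissible subgraphs forces two vertices of linear degree in the dual, a structural feature your cylinder-like candidates lack, so as written the plan does not close.
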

A class of $3$-connected cubic planar graphs that plays a central role in this work is the class of \textbf{Apollonian duals}, i.e.\ the dual graphs of Apollonian networks. Here, an \textbf{Apollonian network} is a planar graph that can be constructed from recursively subdividing the faces of the complete graph $K_4$ into three new faces, see ~\cite{Beinecke,Birkhoff_1930,pachner,fowler,grünbaum}.
\Cref{fig:exIntro} illustrates an example of an Apollonian network and its dual graph. We exploit these graphs to establish the following result.

\begin{theorem*}\label{theorem:main2}
The Apollonian duals are exactly the $3$-connected cubic planar graphs that have no strong embedding on any orientable surface of positive genus (see \Cref{theorem:noOrient}).
\end{theorem*}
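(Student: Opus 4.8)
The plan is to characterise obstruction to positive-genus strong embeddings via the cycle structure of $G$, then translate this back to $G^\ast$. Since $G$ is 3-connected cubic planar, its strong embeddings on the projective plane, torus and Klein bottle are governed (by Enami and Wei\ss\ et al., cited above) by certain subgraphs of $G^\ast$; the orientable positive-genus case is handled by the new result announced in the abstract, namely that a cyclically 4-edge-connected cubic planar graph always admits a strong embedding on some orientable surface of positive genus. So the first step is to show that a 3-connected cubic planar graph which is \emph{not} cyclically 4-edge-connected and yet is not an Apollonian dual still admits a positive-genus orientable strong embedding; equivalently, to show that the only 3-connected cubic planar graphs failing to be cyclically 4-edge-connected in a way that blocks all positive-genus orientable strong embeddings are precisely the Apollonian duals. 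The natural route is a reduction along cyclic 3-edge-cuts: a 3-edge-cut in a cubic graph that separates the graph into two parts each containing a cycle corresponds, in the planar dual, to a separating triangle. Thus I would set up the correspondence ``cyclic 3-edge-cuts of $G$'' $\leftrightarrow$ ``separating triangles of $G^\ast$'', and recall that a planar triangulation in which every triangle is a face (i.e.\ no separating triangles) is exactly a cyclically 4-edge-connected situation on the dual side.

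\textbf{Next}, I would prove the ``only if'' direction: an Apollonian dual has no orientable strong embedding of positive genus. Apollonian networks are built by repeatedly inserting a vertex into a triangular face and joining it to the three corners; dually, $G^\ast$ is built by repeatedly replacing a vertex of degree $3$ by a triangle (a ``$\Delta$--Y'' type expansion, or rather its inverse is edge-contraction toward $K_4^\ast$). The key structural fact is that every Apollonian dual decomposes recursively along cyclic 3-edge-cuts down to copies of $K_4$ (whose dual is $K_4$ itself). I would argue that a strong embedding of $G$ restricted to one side of a cyclic 3-edge-cut, capped off by a disc across the cut, yields a strong embedding of the smaller cubic planar graph obtained by contracting the other side; and conversely a strong embedding of $G$ on an orientable surface would have to be ``assembled'' from strong embeddings of these $K_4$-pieces, each of which — being $K_4$, a planar triangulation — has only the spherical strong embedding. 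An Euler-characteristic / genus-additivity bookkeeping argument along the 3-edge-cuts then forces the total genus to be $0$. This is essentially an induction on the number of recursive subdivisions: the base case $K_4$ embeds strongly only on the sphere, and each subdivision step, performed inside a face, cannot create a handle.

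\textbf{For the ``if'' direction}, suppose $G$ is a 3-connected cubic planar graph that is not an Apollonian dual. If $G$ is cyclically 4-edge-connected, invoke the new theorem from the abstract directly. Otherwise $G$ has a cyclic 3-edge-cut, splitting it into two 3-connected cubic planar graphs $G_1, G_2$ (formed by contracting the opposite side to a single vertex across the cut), each with fewer vertices. By induction, if at least one of $G_1, G_2$ is not an Apollonian dual, it has an orientable positive-genus strong embedding, which can be glued back along the 3-edge-cut (an annulus/disc summation) to produce one for $G$; the genus only adds, so it stays positive. The remaining case is that \emph{both} $G_1$ and $G_2$ are Apollonian duals — but then, since Apollonian duals are exactly closed under gluing along cyclic 3-edge-cuts (the dual statement that subdividing a face of an Apollonian network along a vertex yields an Apollonian network), $G$ itself would be an Apollonian dual, contradicting the hypothesis. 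Hence some positive-genus orientable strong embedding exists.

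\textbf{The main obstacle} I anticipate is the gluing/cutting lemma for strong embeddings along cyclic 3-edge-cuts: one must verify that cutting an orientable strong embedding of $G$ along the three edges of the cut and capping with a disc yields a genuine strong embedding (every face still bounded by a cycle, no new non-cellular regions) of the contracted graph, and that the genera behave additively — orientability is preserved under the gluing and no ``hidden'' handle is lost or created at the cut. Making the face-tracking around the three cut-edges precise, and handling the degenerate case where the cut-edges are incident to a common face, is where the real care is needed; the recursive/inductive skeleton and the translation between separating triangles of $G^\ast$ and cyclic 3-edge-cuts of $G$ are comparatively routine once the standard planar-duality dictionary is in place.
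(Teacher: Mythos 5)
Your ``if'' direction is essentially the paper's argument: decompose along cyclic $3$-edge-cuts (equivalently, separating triangles of $G^\ast$), observe via the structure of the $\mathrm{C4C}$-decomposition that a non-Apollonian $G$ must contain an indecomposable piece that is cyclically $4$-edge-connected and not $K_4$, apply the cyclic-$4$-edge-connectivity theorem (\Cref{corollary:orientable}) to that piece, and lift the resulting embedding back to $G$; the closure statement you need (``both pieces Apollonian duals $\Rightarrow$ $G$ is an Apollonian dual'') is exactly \Cref{prop:indecomposables}. The lifting step is also cleaner than a topological gluing: the twisted subgraph of the piece's dual that defines the embedding is literally a subgraph of $G^\ast$, so no surgery on surfaces is needed.

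The genuine gap is in your ``only if'' direction. You propose to cut an orientable strong embedding of an Apollonian dual along a cyclic $3$-edge-cut, cap with discs, and conclude by genus additivity that everything reduces to spherical embeddings of $K_4$. The obstacle you flag is real and is not a technicality: on a surface of positive genus there need not exist a simple closed curve meeting each of the three cut edges exactly once (the faces incident to the cut edges need not link up into a single annular corridor), and even when such a curve exists it may be \emph{non-separating}, in which case cutting along it does not split the surface into two pieces carrying $G_1$ and $G_2$, and the genus bookkeeping you rely on fails. Your parenthetical that $K_4$ ``has only the spherical strong embedding'' is also false as stated ($K_4$ strongly embeds on the projective plane); it is only the orientable positive-genus embeddings that are excluded, which is precisely what has to be proved. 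The paper (\Cref{theorem:noOrient}) avoids all surface surgery: every embedding of a $3$-connected cubic planar graph is encoded by a twisted subgraph $H$ of $G^\ast$, the embedding is orientable iff $H$ is \emph{even}, and a vertex $v$ with $\deg_{G^\ast}(v)=3$ (the apex of the last subdivision of the Apollonian network $G^\ast$) either lies outside $H$ --- so $H$ lives in the smaller Apollonian network and one inducts --- or satisfies $\deg_H(v)=2$, which is incompatible with strongness by a cited lemma. To repair your argument you would either have to prove the cutting-and-capping lemma for strong embeddings across cyclic $3$-edge-cuts on arbitrary orientable surfaces (including the non-separating case), or switch to this combinatorial even-subgraph argument.
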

\begin{figure}[H]
    \centering
        \begin{subfigure}{.45\textwidth}
        \centering
        \begin{tikzpicture}[scale=2.]
            \tikzset{knoten/.style={circle,fill=black,inner sep=0.6mm}}
            \node [knoten] (a) at (0.,0) {};
            \node [knoten] (b) at (1,0) {};
            \node [knoten] (c) at (0.5,0.8660) {};
            \node [knoten] (d) at (0.5,0.5*0.8660) {};
            \node [knoten] (e) at (barycentric cs:a=1,b=1,d=1) {};
            
            \draw[-,thick] (a) to (b);
            \draw[-,thick] (a) to (c);
            \draw[-,thick] (a) to (d);
            \draw[-,thick] (b) to (c);
            \draw[-,thick] (b) to (d);
            \draw[-,thick] (c) to (d);
            \draw[-,thick] (e) to (d);
            \draw[-,thick] (e) to (a);
            \draw[-,thick] (e) to (b);
        \end{tikzpicture}
        \caption{}
        \label{fig:a}
    \end{subfigure}
        \begin{subfigure}{.45\textwidth}
        \centering
        \begin{tikzpicture}[scale=2.]
            \tikzset{knoten/.style={circle,fill=black,inner sep=0.6mm}}
            \node [knoten] (a) at (0.,0) {};
            \node [knoten] (a1) at (0.25,-0.15) {};
            \node [knoten] (b) at (1,0) {};
            \node [knoten] (b1) at (0.75,-0.15) {};
            \node [knoten] (c) at (0.5,-0.8660) {};
            \node [knoten] (c1) at (0.5,-0.6) {};
            
            \draw[-,thick] (a) to (b);
            \draw[-,thick] (b) to (c);
            \draw[-,thick] (c) to (a);
            \draw[-,thick] (a1) to (b1);
            \draw[-,thick] (b1) to (c1);
            \draw[-,thick] (c1) to (a1);

            \draw[-,thick] (a) to (a1);
            \draw[-,thick] (b) to (b1);
            \draw[-,thick] (c) to (c1);
        \end{tikzpicture}
        \caption{}
        \label{fig:fgD}
    \end{subfigure}
    \caption{An Apollonian network (a) and its Apollonian dual (b)}
    \label{fig:exIntro}
\end{figure}
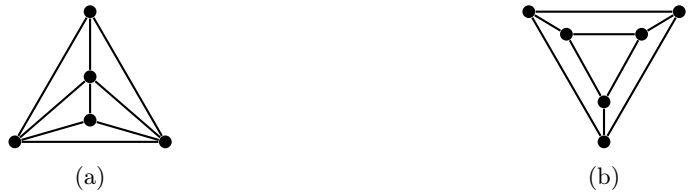

This result is based on the following theorem which we use to consider strong embeddings on orientable surfaces of positive genus.
\begin{theorem*}
    Let $G$ be a cubic planar graph. If $G$ is cyclically 4-edge connected, then $G$ has a strong embedding on an orientable surface (see \Cref{corollary:orientable}).
\end{theorem*}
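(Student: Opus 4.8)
The plan is to prove the statement by exhibiting, for any cyclically $4$-edge connected cubic planar graph $G$, a strong embedding on some orientable surface, and then invoking the earlier machinery relating such embeddings to subgraphs of the dual. Recall Whitney's theorem: $G$ has a unique planar (hence spherical) strong embedding, with a well-defined set of faces and a well-defined dual $G^\ast$. A strong embedding of $G$ on a surface of positive genus is obtained by ``re-embedding'', which---following Enami and Weiß et al.---amounts to choosing a suitable subgraph $H$ of $G^\ast$ along which the spherical embedding is cut and re-glued; the orientability of the resulting surface is controlled by combinatorial properties of $H$ (roughly, $H$ must be the kind of subgraph whose removal, or whose associated surgery, does not introduce a crosscap). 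So the first step is to identify precisely which subgraphs of $G^\ast$ yield \emph{orientable} re-embeddings, extracting this from the cited characterisation in \Cref{theorem:nonIsomorphic} and its supporting results.

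The second step is the combinatorial heart of the argument: showing that cyclic $4$-edge connectivity of $G$ guarantees that at least one admissible orientable subgraph $H \subseteq G^\ast$ exists that is \emph{not} the trivial one (which would just give back the sphere). The natural candidate for $H$ is a cycle in $G^\ast$, since a single noncontractible cycle along which one performs a handle-addition surgery produces an orientable surface of genus one. Cyclic $4$-edge connectivity of $G$ dualizes to a girth/connectivity condition on $G^\ast$: since $G$ is $3$-connected cubic planar, $G^\ast$ is a $3$-connected planar triangulation-like graph, and cyclic $4$-edge connectivity of $G$ forces $G^\ast$ to have no short separating structures, in particular $G^\ast$ contains a cycle of length at least $4$ that is ``essential'' in the sense required by the re-embedding characterisation. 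I would make this precise by translating the definition of cyclic $4$-edge connectivity (no edge cut of size $\le 3$ separating two cycles) into the statement that $G^\ast$ has minimum degree $\ge 4$ or, more usefully, that every $3$-cycle of $G^\ast$ bounds a face---which is exactly the condition that rules out the degenerate case in which the only admissible subgraphs are face-bounding triangles (the Apollonian-dual situation).

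The third step is to assemble these pieces: pick such a cycle $H$ in $G^\ast$, verify it meets the admissibility conditions of the re-embedding theorem (it is an induced or otherwise ``nice'' cycle, it is nonseparating in the relevant sense, and the surgery along it is orientation-preserving), and conclude that the corresponding re-embedding of $G$ lies on an orientable surface of positive genus---a fortiori $G$ has a strong embedding on an orientable surface. Since the case of $G$ itself being (re-)embeddable on the sphere already gives an orientable strong embedding trivially, the only content is exhibiting \emph{some} orientable strong embedding, so even the spherical one suffices to make the bare statement true; but to connect with \Cref{theorem:noOrient} we actually want the positive-genus version when $G$ is not an Apollonian dual, and the spherical fallback otherwise. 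I would phrase \Cref{corollary:orientable} to cover both, noting that cyclic $4$-edge connectivity is the precise combinatorial hypothesis making the positive-genus construction go through.

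The main obstacle I anticipate is the orientability bookkeeping in the second and third steps: the re-embedding operation on the dual side corresponds on the primal side to cutting $G$'s spherical embedding along a closed curve transverse to the edges of $G^\ast$, and one must check that re-gluing can be done respecting a global orientation. Concretely, the danger is that the only noncontractible curves available force a crosscap (yielding the projective plane or Klein bottle rather than the torus). Cyclic $4$-edge connectivity should exactly exclude this---intuitively because it guarantees ``enough room'' in $G^\ast$ to route a two-sided essential cycle---but making the two-sidedness argument rigorous, rather than hand-waving from the connectivity hypothesis, will require care, most likely by examining the local rotation at each vertex of $G^\ast$ along the chosen cycle and verifying that the induced band is an annulus rather than a Möbius band. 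I would isolate this as a self-contained lemma before proving the corollary.
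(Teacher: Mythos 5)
Your outline misplaces where the hypothesis of cyclic $4$-edge connectivity actually does its work, and as a result the two hard points of the proof are not addressed. First, orientability: in the framework of Enami and Wei{\ss} et al., a twisted subgraph $H\subseteq G^\ast$ yields an embedding on an orientable surface if and only if $H$ is an \emph{even} subgraph (all vertices of even degree); in particular any cycle qualifies automatically, so the obstacle you anticipate about crosscaps and M\"obius bands is a non-issue and is not what the connectivity hypothesis is for. Second --- and this is the real gap --- the hypothesis is needed for \emph{strongness}: the relevant criterion (\Cref{lemma:twistedCycle}, resting on the visited-edges test of Wei{\ss} et al.) requires the chosen cycle $H$ to have \emph{even length} and to be chordless in $G^\ast$, i.e.\ no two non-consecutive vertices of $H$ may be adjacent in $G^\ast$. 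Your plan of picking ``a cycle of length at least $4$ that is essential'' gives no reason why such a chordless even cycle should exist; the paper has to construct it explicitly. If $G$ has a face of even length $2k$, one takes the cycle of neighbours of the corresponding dual vertex (\Cref{prop:even}); there a chord would create a separating triangle in $G^\ast$, which is exactly what cyclic $4$-edge connectivity of $G$ forbids.

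The second omission is the case analysis. A cyclically $4$-edge connected cubic planar graph need not have any even face (the dodecahedral graph has only pentagons), so the single-face construction cannot carry the whole proof. The paper's argument splits into three cases: if $G$ is cyclically $4$- but not $5$-edge connected, it invokes a prior result giving a strong torus embedding; if $G$ has an even face, it applies \Cref{prop:even}; otherwise $G$ is cyclically $5$-edge connected with all faces odd of length at least $5$, and one takes the even-length cycle surrounding two \emph{adjacent} dual vertices of odd degree (\Cref{prop:odd}), where chordlessness now requires cyclic $5$-edge connectivity to exclude separating $4$-cycles in $G^\ast$. Neither this trichotomy nor the second construction appears in your proposal, so the argument as sketched already fails on the dodecahedral graph.
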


Our paper is structured as follows: In \Cref{sec:preliminaries} we introduce preliminary notions on graphs and their embeddings that are necessary to conduct our investigations.
In \Cref{sec:isomorphism}, we show that cyclically 4-edge connected cubic planar graphs have a strong embedding on an orientable surface of positive genus.
Moreover, we characterise non-isomorphic strong embeddings of 3-connected cubic planar graphs on certain surfaces and show that the number of non-isomorphic strong embeddings on the torus and the Klein bottle can grow exponentially in the number of vertices.
Additionally, we provide details on the data base of non-isomorphic strong embeddings of all $3$-connected cubic planar graphs that we have computed during our investigation. Studying this database has motivated us to study strong embeddings of Apollonian duals, see \Cref{subsec:apollonian}. In this section we have been able to establish the existence of strong embeddings of Apollonian duals on the projective plane and the Klein bottle. As a main result we show in \Cref{sec:orient} that Apollonian duals are precisely the $3$-connected cubic planar graphs that do not admit a strong embedding on any orientable surface of positive genus.

We exploit the computer algebra system \textsc{GAP} \cite{GAP4} to conduct our research. In particular, we used the \textsc{GAP}-packages \textit{SimplicialSurfaces}~\cite{simplicialsurfacegap} and \textit{Digraphs}~\cite{Digraphs}. The package \textit{SimplicialSurfaces} contains functions that enable us to study the combinatorial structure of triangulations of surfaces and strong embeddings of cubic graphs. 
For our computations, we used the database of $3$-connected cubic planar that has been established in~\cite{brinkmann}.

\section{Preliminaries}\label{sec:preliminaries}
We assume that all the graphs in this paper are undirected, connected, simple and finite. Let $G$ be such a graph. An \textbf{automorphism} of $G$ is a bijective map $\varphi: V(G)\rightarrow V(G)$ satisfying $\bigl\{ \{\varphi(v),\varphi(w)\}\mid \{v,w\}\in E(G)\bigr\}=E(G)$. We denote the group of automorphisms of $G$ by $\Aut(G)$.
Furthermore, for $e=\{v,w\}\in E(G)$ and $\varphi\in \Aut(G)$ we define $\varphi(e):=\{\varphi(v),\varphi(w)\}\in E(G)$.  Hence, the orbit of a subgraph $H\subseteq G$ under $\Aut(G)$ is defined as follows:
$$H^{\Aut(G)}:=\bigl\{H'\mid V(H')=\varphi(V(H)) \text{ and } E(H')=\varphi(E(H))\text{ for }\varphi\in\Aut(G)\bigr\}.$$
A \textbf{cycle} in $G$ is a closed walk, where only the start and end vertices are the same. Furthermore, $G$ is \textbf{cyclically $k$-edge connected} if there is no set $A\subseteq E(G)$ of at most $k{-}1$ edges such that the graph $G\setminus A$ has at least two connected components containing a cycle. Note that a \textbf{cyclic edge cut} is a set of edges such that deleting them results in a graph with at least two connected components having a cycle.
A cycle of length three in a given graph is called a \textbf{triangle}. We call a triangle $T=(v_1,v_2,v_3)$ of a graph $G$ \textbf{separating} if $G\setminus\{v_1,v_2,v_3\}$ is disconnected.
In the following definition we describe how triangles of a graph can be subdivided into three triangles to obtain a new graph.
\begin{definition}\label{def:subdivision}
   Let $G$ be a graph and $T=(v_1,v_2,v_3)$ a triangle of $G$. We define $G^T$ as the graph that is constructed by adding a new vertex $v$ to $G$ such that $v$ is incident to $v_1,v_2$ and $v_3$. We call this operation the \textbf{\emph{subdivision of a triangle}}.
   The inverse operation, namely removing a vertex $w\in V(G)$ with $\deg(w)=3$ from $G$, is called the \textbf{\emph{deletion}} of $w$. The graph resulting from this operation is denoted by $G_w$.
\end{definition}
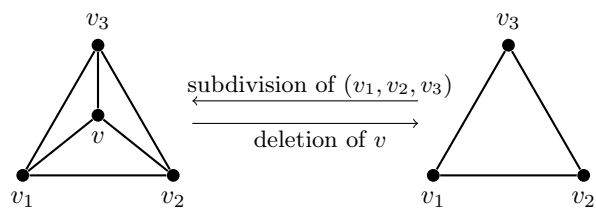
\begin{figure}[H]
\centering
\begin{minipage}{0.15\textwidth}
\begin{tikzpicture}[scale=2]
\tikzset{knoten/.style={circle,fill=black,inner sep=0.6mm}}

\node [knoten,label=below: $v_1$] (V1) at (0,0) {};
\node [knoten,label=below: $v_2$] (V2) at (1,0) {};
\node [knoten,label=$v_3$] (V3) at (0.5, 0.8660254037844386) {};
\node [knoten,label=below: $v$] (V4) at (0.5,0.4) {};

\draw[-,thick] (V2) to (V1);
\draw[-,thick] (V1) to (V3);
\draw[-,thick] (V1) to (V4);
\draw[-,thick] (V4) to (V2);
\draw[-,thick] (V3) to (V4);
\draw[-,thick] (V2) to (V3);

\end{tikzpicture}
\end{minipage}
\begin{minipage}{0.2\textwidth}
    \begin{tikzpicture}[scale=1]
        \draw[->] (-1.5,0) -- (1.5,0);
        \draw[<-] (-1.5,0.3) -- (1.5,0.3);
        \node (E) at (0.2,-0.2) {\small{deletion of $v$}};
        \node (F) at (0.2,0.5) {\small{subdivision of $(v_1,v_2,v_3)$}};
    \end{tikzpicture}
\end{minipage}
\begin{minipage}{0.15\textwidth}
\begin{tikzpicture}[scale=2]
\tikzset{knoten/.style={circle,fill=black,inner sep=0.6mm}}

\node [knoten,label=below:$v_1$] (V1) at (0,0) {};
\node [knoten,label=below:$v_2$] (V2) at (1,0) {};
\node [knoten,label=$v_3$] (V3) at (0.5, 0.8660254037844386) {};

\draw[-,thick] (V2) to (V1);
\draw[-,thick] (V1) to (V3);
\draw[,thick] (V3) to (V2);
\end{tikzpicture}
\end{minipage}
\caption{Subdivision of the triangle $(v_1,v_2,v_3)$ and deletion of the vertex $v$ of degree three}
\end{figure}

\begin{definition}
    An \emph{\textbf{Apollonian network}} is a planar graph that is constructed by recursively subdividing a triangular face starting from the complete graph $K_4$. Moreover, an \emph{\textbf{Apollonian dual}} is defined as the dual graph of an Apollonian network.
\end{definition}

An \textbf{embedding} of a graph $G$ on a compact 2-dimensional manifold $S$ without boundary is an injective and continuous map $\beta :G\rightarrow S$. The \textbf{cells} of $\beta(G)$ are the connected components of $S\setminus\beta(G)$. 
All embeddings in this paper are assumed to be cellular embeddings, i.e.\ the cells of a given embedding are homeomorphic to open $2$-cells.
A closed walk of the embedded graph $\beta(G)$ is called a \textbf{facial walk} if it bounds a cell. If a facial walk is a cycle, we call it a \textbf{facial cycle}.
If $G$ is a planar graph, we call the facial walks of the embedding on the sphere \textbf{faces}.
Furthermore, a \textbf{planar triangulation} is a planar graph with all faces being triangles, e.g.\ Apollonian networks.
The embedding $\beta$ of $G$ is called \textbf{strong} if all corresponding facial walks are facial cycles. 
Note that two strong embeddings of $G$ are \textbf{equivalent} if and only if the corresponding sets of facial cycles are equal. Additionally, two strong embeddings of $G$ are called \textbf{isomorphic} if and only if the corresponding sets of facial cycles are contained in the same orbit under the action of $\Aut(G)$. 
From a combinatorial perspective every embedding of $G$ can be defined by a rotation system and a set of twisted edges. For each vertex $v$ of the embedded graph $G$ a \textbf{rotation system} defines the cyclic ordering of the edges that are incident to $v$. 
Furthermore, the \textbf{twisted edges} give a partial combinatorial description of the edges that are included in a facial walk of the given embedding. For simplicity, we refer to \cite{TopologicalGraphTheory,GraphsOnSurfaces,PaperMeikeStrong} for a formal definition of twisted edges. Loosely speaking, the twisted edges of a graph define whether the rotation system at a vertex must be considered in clockwise or anticlockwise orientation. These edges can be encoded into a \textbf{signature}, which is a function $\lambda$ that maps the edges of a given graph $G$ onto the set $\{-1,1\}$. The edges $e\in E(G)$ satisfying $\lambda(e)=-1$ are exactly the twisted edges. 
A graph embedding is defined uniquely using both a rotation system and a signature. The facial walks of the embedded graph can be computed from a given rotation system and a given signature by applying the face traversal algorithm. More details on this algorithm, as well as on rotation systems and twisted edges, can be found in~\cite{TopologicalGraphTheory,GraphsOnSurfaces,PaperMeikeStrong}.

Strong embeddings of $3$-connected cubic planar graphs on the projective plane, the torus and the Klein bottle can be characterised by certain subgraphs of the corresponding dual graphs, see \cite{PaperMeikeStrong}. In order to recall this result, we denote the \textbf{complete graph} on $n$ vertices by $K_n$. For a positive integer $k\geq 2$ we further denote a \textbf{complete $k$-partite graph} with $k$ partition sets $V_1, V_2,\dots, V_k$ such that $\vert V_i\vert=n_i$ for $1\leq i \leq k$ by $K_{n_1,n_2,\dots, n_k}$.

\begin{theorem}[see \cite{PaperMeikeStrong}]\label{theorem:reembedding}
Let $G$ be a 3-connected cubic planar graph. There exists a one-to-one correspondence between inequivalent strong embeddings of $G$ on
    \begin{itemize}
        \item[1)] the projective plane and subgraphs of $G^{\ast}$ that are isomorphic to $K_4$.
        \item[2)] the torus and subgraphs of $G^{\ast}$ that are isomorphic to $K_{2,2,2}$ or isomorphic to $K_{2,2m}$ for $m\geq 1$, where for $K_{2,2m}$ the vertices in the partition sets of size two are not adjacent in $G^{\ast}$.
        \item[3)] the Klein bottle and subgraphs of $G^{\ast}$ that are isomorphic to $A_3, A_5, A_6$ or $K_{2,2m-1}$ for $m\geq 2$, where for $K_{2,2m-1}$ the vertices in the partition sets of size two are not adjacent in $G^{\ast}$.
    \end{itemize}
\end{theorem}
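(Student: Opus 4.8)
The plan is to derive this characterisation from Enami's structural description of the inequivalent embeddings of a $3$-connected cubic planar graph on the projective plane, the torus and the Klein bottle \cite{EnamiEmbeddings}, and then to single out, among these embeddings, exactly the ones that are strong. By Whitney's theorem \cite{Whitney} the embedding of $G$ on the sphere is unique and automatically strong, so every embedding of $G$ on one of the three surfaces in question that is not the spherical one is a genuine re-embedding. Enami shows that each such re-embedding is obtained from the planar embedding by cutting out one or two sub-discs, each bounded by a cycle of $G$, and re-gluing them — onto a cross-cap (projective plane), across a handle (torus), or across a twisted handle (Klein bottle). The combinatorial data of such a sub-disc is recorded faithfully by a subgraph of the planar dual $G^{\ast}$: since $G$ is cubic, $G^{\ast}$ is a planar triangulation whose vertices are the faces of $G$, and the faces of $G$ making up a cut-out patch, together with their mutual adjacencies, form a subgraph $H\subseteq G^{\ast}$.

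First I would fix, for each of the three surfaces, Enami's list of admissible subgraphs of $G^{\ast}$ and the re-embedding operation attached to each. Then, for a given admissible $H\subseteq G^{\ast}$, I would apply the face-traversal algorithm to the re-embedded graph and ask when every resulting facial walk is a cycle, i.e.\ visits no vertex twice. The crucial observation is that the re-embedding operation only alters the facial walks supported on the patch(es) corresponding to $H$; all remaining facial cycles are inherited unchanged from the spherical embedding. Hence strongness of the new embedding reduces to a purely \emph{local} statement about $H$, and one only has to determine, patch by patch, which isomorphism types of $H$ produce nothing but cyclic facial walks. Carrying this analysis out should yield exactly $K_4$ for the projective plane; $K_{2,2,2}$ together with the graphs $K_{2,2m}$, $m\geq 1$, in which the two vertices of the partition set of size two are non-adjacent in $G^{\ast}$, for the torus; and the exceptional graphs $A_3,A_5,A_6$ together with $K_{2,2m-1}$, $m\geq 2$, again with the non-adjacency side condition, for the Klein bottle. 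The non-adjacency hypotheses are precisely what rules out the degenerate patches whose re-embedding either forces a repeated vertex on a facial walk or merely reproduces the spherical embedding.

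It then remains to upgrade the assignment ``$H\mapsto$ re-embedding'' to a one-to-one correspondence with inequivalent strong embeddings. In one direction, two distinct admissible subgraphs $H,H'$ yield inequivalent embeddings: the set of facial cycles of a re-embedding records exactly which faces of $G$ were regrouped, so distinct patches give distinct facial-cycle sets, and one checks separately that the embeddings attached to $H$ are pairwise inequivalent when the patch admits several gluings. In the other direction, every strong embedding of $G$ on one of these surfaces arises from some admissible $H$, which is immediate from Enami's classification once we know that the strong embeddings form a subfamily of the full list.

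The main obstacle I expect is the local facial-walk analysis in the middle step, particularly for the infinite families $K_{2,2m}$ and $K_{2,2m-1}$: there the patch is a long repetitive configuration with two distinguished boundary cycles, and after the (possibly twisted) regluing one must track carefully how these two boundary cycles interact, verify that no facial walk repeats a vertex, and pin down exactly the adjacency hypotheses on $G^{\ast}$ under which this holds. Getting these side conditions tight — neither so strong that valid strong embeddings are excluded, nor so weak that non-strong ones slip in — is the delicate part of the argument; the bijectivity claims, by contrast, should be routine once the local analysis is complete.
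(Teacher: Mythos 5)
This statement is not proved in the paper at all: it is quoted verbatim from the cited reference \cite{PaperMeikeStrong}, so there is no in-paper argument to compare yours against. That said, your outline does match the strategy that the paper attributes to that reference: start from Enami's classification of the inequivalent (projective-planar, toroidal, Klein-bottle) re-embeddings of $G$ in terms of subgraphs of $G^{\ast}$ viewed as twisted subgraphs/signatures, and then filter out exactly those subgraphs whose induced embedding is strong. The strongness test you describe informally (``apply the face-traversal algorithm and check that no facial walk repeats a vertex'') is precisely the visited-edges criterion of \cite[Theorem 19]{PaperMeikeStrong} that the present paper recalls after Definition 3.1 and uses in Lemma 3.2. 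So the route is the right one.

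The genuine gap is that your middle step --- the only step carrying mathematical content --- is announced but not performed. The sentence ``Carrying this analysis out should yield exactly $K_4$; \dots; $A_3,A_5,A_6$ and $K_{2,2m-1}$'' simply restates the conclusion of the theorem as the hoped-for output of an unexecuted case analysis. Nothing in the proposal derives Enami's admissible list for each surface, shows why the non-adjacency condition on the size-two partition class is exactly the right side condition (rather than, say, a condition on the other partition class or on second neighbourhoods), or explains where the sporadic graphs $A_3, A_5, A_6$ come from and why no further sporadic types occur. You also wave at the bijectivity claims (``distinct patches give distinct facial-cycle sets \dots should be routine''), but for the infinite families $K_{2,2m}$ and $K_{2,2m-1}$ a single subgraph admits several regluings, and checking that the resulting embeddings are pairwise inequivalent is part of what the cited theorem asserts. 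As it stands the proposal is a correct plan for reproving the cited result, not a proof of it; every point at which the specific graph lists and side conditions could be validated or falsified has been deferred.
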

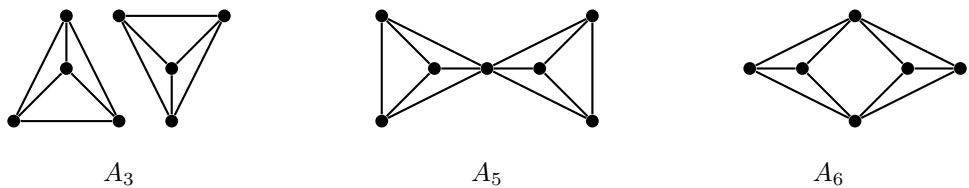
\begin{figure}[H]
    \centering
    \begin{minipage}{0.32\textwidth}
        \centering
        \begin{tikzpicture}[scale=1.4]
            \tikzset{knoten/.style={circle,fill=black,inner sep=0.6mm}}
            \node [knoten] (a) at (0,0) {};
            \node [knoten] (b) at (1,0) {};
            \node [knoten] (c) at (0.5,0.5) {};
            \node [knoten] (d) at (0.5,1) {};
            \node [knoten] (e) at (1,1) {};
            \node [knoten] (f) at (2,1) {};
            \node [knoten] (g) at (1.5,0.5) {};
            \node [knoten] (h) at (1.5,0) {};
            
            \draw[-, thick] (a) to (b);
            \draw[-, thick] (a) to (c);
            \draw[-, thick] (a) to (d);
            \draw[-, thick] (b) to (c);
            \draw[-, thick] (b) to (d);
            \draw[-, thick] (c) to (d);
            
            \draw[-, thick] (e) to (f);
            \draw[-, thick] (e) to (g);
            \draw[-, thick] (e) to (h);
            \draw[-, thick] (f) to (g);
            \draw[-, thick] (f) to (h);
            \draw[-, thick] (g) to (h);
            
            \node (C) at (1,-0.5) {$A_3$};
        \end{tikzpicture}
    \end{minipage}
    \begin{minipage}{0.32\textwidth}
        \centering
        \begin{tikzpicture}[scale=1.4]
            \tikzset{knoten/.style={circle,fill=black,inner sep=0.6mm}}
            \node [knoten] (a) at (0,0) {};
            \node [knoten] (b) at (0,1) {};
            \node [knoten] (c) at (0.5,0.5) {};
            \node [knoten] (d) at (1,0.5) {};
            \node [knoten] (e) at (1.5,0.5) {};
            \node [knoten] (f) at (2,1) {};
            \node [knoten] (g) at (2,0) {};
            
            \draw[-, thick] (a) to (b);
            \draw[-, thick] (a) to (c);
            \draw[-, thick] (a) to (d);
            \draw[-, thick] (b) to (c);
            \draw[-, thick] (b) to (d);
            \draw[-, thick] (c) to (d);
            
            \draw[-, thick] (e) to (f);
            \draw[-, thick] (e) to (g);
            \draw[-, thick] (e) to (d);
            \draw[-, thick] (f) to (g);
            \draw[-, thick] (f) to (d);
            \draw[-, thick] (g) to (d);
            
            \node (C) at (1,-0.5) {$A_5$};
        \end{tikzpicture}
    \end{minipage}
    \begin{minipage}{0.32\textwidth}
        \centering
        \begin{tikzpicture}[scale=1.4]
            \tikzset{knoten/.style={circle,fill=black,inner sep=0.6mm}}
            \node [knoten] (a) at (0,0) {};
            \node [knoten] (b) at (0.5,0) {};
            \node [knoten] (c) at (1,0.5) {};
            \node [knoten] (d) at (1,-0.5) {};
            \node [knoten] (e) at (1.5,0) {};
            \node [knoten] (f) at (2,0) {};
            
            \draw[-, thick] (a) to (b);
            \draw[-, thick] (a) to (c);
            \draw[-, thick] (a) to (d);
            \draw[-, thick] (b) to (c);
            \draw[-, thick] (b) to (d);
            \draw[-, thick] (c) to (e);
            \draw[-, thick] (c) to (f);
            \draw[-, thick] (d) to (e);
            \draw[-, thick] (d) to (f);
            \draw[-, thick] (e) to (f);
            
            \node (C) at (0.75,-1) {$A_6$};
        \end{tikzpicture}
    \end{minipage}
    \caption{The graphs $A_3, A_5$ and $A_6$}
    \label{fig:subgraphsKleinBottle}
\end{figure}

Next, we outline how an embedding of a 3-connected cubic planar graph $G$ can be described in terms of a subgraph of its dual.
First, we observe that the unique planar embedding of $G$ can be described by a rotation system together with a signature $\lambda$ that is $1$ for all edges of $G.$
Thus, it is sufficient to define other embeddings of $G$ by altering its corresponding signature, which is achieved by exploiting the edges of a certain subgraph $H$ of $G^{\ast}$. 
This subgraph gives rise to a signature $\lambda'$ that maps an edge $e\in E(G)$ onto $-1$ if and only if its dual edge is contained in $H$. In this case, we refer to $H$ as a \textbf{twisted subgraph} of $G^{\ast}$. 
Enami establishes that a given $3$-connected cubic planar graph has an embedding on an orientable surface if and only if the dual graph contains an even subgraph, i.e.\ a subgraph where all vertices have even degree, see \cite[Lemma 8]{EnamiEmbeddings}.
In \cite[Example 9]{PaperMeikeStrong} we can find an example of a strong embedding that has been computed by exploiting the above approach.

\section{Characterisation of Isomorphism Classes}\label{sec:isomorphism}
We start this section by showing that cyclically 4-edge connected cubic planar graphs admit strong embeddings on orientable surfaces of positive genera. Then, we characterise the isomorphic strong embeddings of a 3-connected cubic planar graph on the torus, the projective plane, and the Klein bottle.
Exploiting this characterisation allows us to compile a data base of non-isomorphic strong embeddings on the torus, the projective plane and the Klein bottle. Here, we present some of the details of this data base. Finally, we prove the existence of a family of 3-connected cubic planar graphs that exhibits exponential growth of non-isomorphic strong embeddings on the torus and the Klein bottle in the number of vertices.

\subsection{Orientable Strong Embeddings of Cyclically 4-Edge Connected Cubic Planar Graphs}
\Cref{theorem:reembedding} gives us an exact description of the twisted subgraphs that define strong embeddings of 3-connected cubic planar graphs on the torus. Hence, in order to prove that cyclically 4-edge connected cubic planar graphs can be strongly embedded on orientable surfaces we have to extend this result. In particular, we have to construct a suitable class of graphs such that at least one of them forms a twisted subgraph of an arbitrary cyclically $4$-edge connected cubic planar graph defining an orientable strong embedding. As a first step, we recall the definition of visited edges from \cite{PaperMeikeStrong}. This will help us to determine twisted subgraphs that define strong embeddings.

Let therefore $G$ be a 3-connected cubic planar graph and $H$ a twisted subgraph of $G^{\ast}$. 
We consider the embedding of $H$, denoted by $\beta_G(H)$, with the rotation system induced by $G$ and with all its edges treated as twisted. 
We use this embedding to decide whether the given twisted subgraph $H$ gives rise to a strong embedding of $G$.

\begin{definition}\label{def:dualcycles}
    Let $G$ be a $3$-connected cubic planar graph and $H$ a twisted subgraph of $G^{\ast}$.
    Furthermore, let $C=(w_1,\dots,w_n)$ be a facial walk of $\beta_G(H)$. Note that the orientations at the vertices defined by $C$ alternate, since all edges of $C$ are twisted.
    Let $e_1,\ldots,e_n$ be the edges of $H$ defined by $e_i=\{w_{i-1},w_i\}$ for $i\in\{2,\dots,n\}$ and $e_1=\{w_1,w_n\}$.
    We define the tuple of \emph{\textbf{visited edges}} at a vertex $w_i$ in $C$ to be the edges in $G^{\ast}$ that are incident to $w_i$ and lie between $e_i$ and $e_{i+1}$ in the clockwise or anticlockwise orientation depending on $C$.
\end{definition}
Weiß et al.\ show that the embedding $\beta$ of a 3-connected cubic planar graph $G$ defined by a twisted subgraph $H$ is strong if and only if for each facial walk $C$ of $\beta_G(H)$ and each pair of vertices in $C$ the set of visited edges are disjoint, see \cite[Theorem 19]{PaperMeikeStrong}.

To enhance clarity of visited edges of a given 3-connected cubic planar graph, we give the following illustration:
Let $G$ be a $3$-connected cubic planar graph such that its dual graph $G^{\ast}$ contains a subgraph $H$ that is isomorphic to $K_{2,2}$ with $V(H)=\{a,b,c,d\}.$ Furthermore, let $\{a,c\}$ and $\{b,d\}$ be the partition sets of $H$. The embedding $\beta_G(H)$, where all four edges are twisted, has exactly two facial walks $C_1,C_2$ that are both described by the cycle $(a,b,c,d)$. We observe that the orientation of a vertex $v\in V(H)$ is clockwise with respect to $C_1$ if and only if the orientation of $v$ with respect to $C_2$ is anti-clockwise. Moreover, we see that the orientations of the vertices $a,b,c$ and $d$ alternate in both facial walks. 
Without loss of generality, let $C_1$ be the facial walk where the orientation at $a$ and $c$ is clockwise and $C_2$ the facial walk where the orientation at $a$ and $c$ is anticlockwise. This allows us to illustrate the different sets of visited edges of $G$ in \Cref{fig:visitedEdges} where the visited edges of $C_1$ are indicated in orange and of $C_2$ in blue. Due to the result established by Weiß et al.\ the embedding of $G$ induced by the twisted subgraph $H$ is strong if and only if $a$ and $c$ are not adjacent in $G^\ast$ and neither are $b$ and $d$.

\begin{figure}[H]
    \centering
    \begin{tikzpicture}[scale=2]
            \tikzset{knoten/.style={circle,fill=black,inner sep=0.7mm}}
            \node [knoten,label=right:$b$] (b) at (0,0.5) {};
            \node [knoten,label=$c$] (c) at (-0.45,0) {};
            \node [knoten,label=right:$d$] (d) at (0,-0.5) {};
            \node [knoten,label=$a$] (e) at (0.45,0) {};
            \draw[-,orange,ultra thick] (-0.2,0) to (c);
            \draw[-,orange,ultra thick] (0.2,0) to (e);
            \draw[-,very thick] (b) to (c);
            \draw[-,very thick] (b) to (e);
            \draw[-,very thick] (c) to (d);
            \draw[-,very thick] (d) to (e);

            \draw[-,orange,ultra thick] (b) to (0,0.7);
            \draw[-,orange,ultra thick] (d) to (0,-0.7);

            \draw[-,blue,ultra thick] (b) to (0,0.3);
            \draw[-,blue,ultra thick] (d) to (0,-0.3);
            \draw[-,blue,ultra thick] (-0.7,0) to (c);
            \draw[-,blue,ultra thick] (0.7,0) to (e);
        \end{tikzpicture}
    \caption{Embedding of $K_{2,2}$ in $G^{\ast}$ with visited edges coloured orange and blue}
    \label{fig:visitedEdges}
\end{figure}
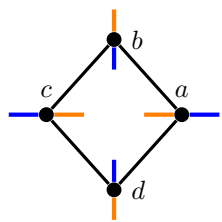
In the following, we describe a criterion to decide whether a twisted subgraph forming a cycle of even length gives rise to a strong embedding of a given graph.
\begin{lemma}\label{lemma:twistedCycle}
    Let $G$ be a 3-connected cubic planar graph and $H$ a subgraph of $G^\ast$ isomorphic to a cycle of length $2k$ with $k\geq 2$. If every pair of non-adjacent vertices $v,w\in V(H)$ satisfies $\{v,w\}\notin E(G^\ast)$, then the embedding of $G$ induced by $H$ is strong.
\end{lemma}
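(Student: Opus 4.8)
The plan is to apply the criterion of Wei\ss{} et al.\ (\cite[Theorem 19]{PaperMeikeStrong}): the embedding of $G$ induced by the twisted subgraph $H$ is strong if and only if for each facial walk $C$ of $\beta_G(H)$ and each pair of distinct vertices of $C$, their tuples of visited edges are disjoint. So I would first analyse the facial walks of $\beta_G(H)$ when $H$ is a $2k$-cycle with all edges twisted. Since $H$ is a cycle, its embedding $\beta_G(H)$ has exactly two facial walks, and because all edges are twisted the orientations at successive vertices alternate along each walk; as $2k$ is even this is consistent, and each facial walk traverses the cycle once, so each vertex of $H$ appears exactly once in each facial walk. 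Label the cyclic sequence of vertices $w_1,\dots,w_{2k}$ with edges $e_i=\{w_{i-1},w_i\}$ (indices mod $2k$). At a vertex $w_i$ of degree three in $G^\ast$, the two edges $e_i,e_{i+1}$ of $H$ incident to $w_i$ divide the rotation at $w_i$ into two arcs; the visited edges at $w_i$ in a given facial walk are the edges of $G^\ast$ lying strictly in one of these arcs (which arc depends on the orientation, hence on the facial walk), and since $\deg_{G^\ast}(w_i)=3$ this tuple consists of exactly one edge, call it $f_i$ for walk $C_1$ and $f_i'$ for walk $C_2$, where $\{f_i,f_i'\}$ is precisely the pair of edges at $w_i$ not in $H$. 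In the second facial walk the orientation at $w_i$ is reversed, so the visited edge there is the other of the two non-$H$ edges at $w_i$.

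Next I would show that the visited edges are pairwise disjoint within each facial walk. Fix one facial walk, say $C_1$, and consider two distinct vertices $w_i, w_j$ of $H$ with visited edges $f_i$ and $f_j$ respectively. Suppose for contradiction $f_i = f_j =: f$; then $f$ is an edge of $G^\ast$ incident to both $w_i$ and $w_j$, so $f = \{w_i, w_j\}$, and moreover $f \notin E(H)$, which forces $w_i$ and $w_j$ to be non-adjacent \emph{in $H$}. By hypothesis, every pair of vertices of $H$ that is non-adjacent in $H$ is also non-adjacent in $G^\ast$, contradicting $\{w_i,w_j\} = f \in E(G^\ast)$. Hence no two visited edges in $C_1$ coincide, and the same argument applies verbatim to $C_2$. (A small point to check: a visited edge $f_i$ at $w_i$ could a priori coincide with an edge $e_j \in E(H)$ incident to some other vertex $w_j$, but such an edge joins two vertices of $H$ and lies in $H$, whereas $f_i\notin E(H)$; so this cannot happen either. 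More importantly, the criterion only concerns disjointness of the \emph{tuples of visited edges}, and since each tuple here is a single edge of $G^\ast\setminus H$, distinctness of these singletons is exactly what we established.) Applying \cite[Theorem 19]{PaperMeikeStrong} then yields that the induced embedding of $G$ is strong.

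I expect the main obstacle to be the bookkeeping around the face-traversal structure of $\beta_G(H)$: one must argue carefully that a $2k$-cycle with all edges twisted has exactly two facial walks, each visiting every vertex once, and correctly identify which of the two non-$H$ edges at each vertex is "visited" in each walk. This is where the parity $2k$ (even) is essential — with an odd cycle the alternating orientation would be inconsistent and the facial-walk structure would differ — and it is the step most prone to an off-by-one or orientation error. Once the combinatorial picture is pinned down, the disjointness argument itself is short: it reduces, as above, to the observation that two coinciding visited edges would form an edge of $G^\ast$ between two vertices non-adjacent in $H$, which the hypothesis forbids. I would also remark that this lemma generalises the $K_{2,2}$ illustration preceding it (the case $k=2$, where $H$ is the $4$-cycle $(a,b,c,d)$ and the two non-adjacent pairs are $\{a,c\}$ and $\{b,d\}$), which can serve as a sanity check on the orientation conventions.
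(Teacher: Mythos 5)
Your proposal follows essentially the same route as the paper: both reduce the claim to the criterion of \cite[Theorem 19]{PaperMeikeStrong}, note that the all-twisted even cycle $H$ yields exactly two facial walks, each traversing $(v_1,\dots,v_{2k})$ with alternating (and mutually opposite) orientations at the vertices, and then argue that a visited edge common to two vertices $w_i,w_j$ of a facial walk must be the edge $\{w_i,w_j\}$ of $G^\ast$, which lies outside $E(H)$ and therefore joins two vertices non-adjacent in $H$ --- exactly what the hypothesis forbids.

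One factual slip: you assert that $\deg_{G^\ast}(w_i)=3$ and hence that each tuple of visited edges is a singleton. This confuses $G$ with its dual: $G$ is cubic, so $G^\ast$ is a planar \emph{triangulation}, whose vertices can have arbitrary degree at least $3$ (for $G$ the cube, $G^\ast$ is the octahedron with all degrees $4$). In general the $\deg_{G^\ast}(w_i)-2$ non-$H$ edges at $w_i$ are split between the two arcs determined by $e_i$ and $e_{i+1}$, one arc visited in $C_1$ and the other in $C_2$, and these tuples can contain several edges. Fortunately this does not damage your argument: the disjointness step only uses that a common visited edge is incident to both $w_i$ and $w_j$ and is not an edge of $H$, which holds irrespective of the tuple sizes; the paper's proof runs exactly this argument without the singleton claim. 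If you drop the degree-$3$ assertion and phrase the contradiction as "the tuples at $w_i$ and $w_j$ share an edge $f$" rather than "$f_i=f_j$", your proof coincides with the paper's.
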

\begin{proof}
    Let $H$ be the cycle that is described by the closed walk $(v_1,\ldots,v_{2k})$ with $k\geq 2$. Since $H$ is an even subgraph, the embedding $\beta_G(H)$, where all edges are twisted, has exactly two facial cycles. These two facial cycles, namely $C_1$ and $C_2$, satisfy $C_1=C_2=(v_1,\ldots,v_{2k})$. We observe that these facial cycles can be distinguished from each other by observing the corresponding orientation at the vertices. In particular, this means that for $1\leq i\leq 2k$ the orientation at $v_i$ in $C_1$ is clockwise if and only if the orientation at $v_i$ in $C_2$ is anticlockwise.
    Furthermore, we see that the orientations of any pair of adjacent vertices of one of the above facial cycles alternate, i.e.\ for $i=1,\ldots,2k$ the orientation at the vertex $v_i$ in $C'\in \{C_1,C_2\}$ is clockwise if and only if the orientation of $v_{i+1}$ (with $v_{2k+1}:=v_1$) in $C'$ is anticlockwise.
    Assume that the embedding of $G$ induced by $H$ is not strong. By \cite[Theorem 19]{PaperMeikeStrong}, this means that one of the two facial cycles of $\beta_G(H)$ contains a pair of non-adjacent vertices such that the corresponding sets of visited edges with respect to $H$ are not disjoint. Let $v_i$ and $v_j$ be vertices of $H$ that share a common visited edge $e$. Thus, $e=\{v_i,v_j\}$, which implies that $v_i$ and $v_j$ are vertices that are adjacent in $G^{\ast}$ but not in $H$. Hence, the result follows.
\end{proof}
We use the above lemma to prove the following two propositions. We will exploit these propositions to establish that cyclically 4-edge connected cubic planar graphs always have strong embeddings on orientable surfaces of positive genera.
\begin{proposition}
    \label{prop:even}
    Let $G$ be a cyclically 4-edge connected cubic planar graph. If $G$ has a face of length $2k$ with $k\geq 2$, then $G$ has a strong embedding on an orientable surface of genus $k-1$.
\end{proposition}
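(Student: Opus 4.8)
The plan is to realise the orientable embedding of genus $k-1$ by choosing the twisted subgraph of $G^{\ast}$ to be the boundary cycle of the face of length $2k$, and then invoking \Cref{lemma:twistedCycle} together with Enami's orientability criterion and the Euler formula to identify the surface.

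First I would let $F$ be the face of $G$ of length $2k$, and let $f$ be the corresponding vertex of $G^{\ast}$. The $2k$ edges on the boundary walk of $F$ are pairwise distinct because $G$ is cyclically $4$-edge connected (in particular $3$-edge connected), so the boundary of $F$ is an actual cycle of length $2k$ in $G$; dualising, the $2k$ dual edges incident with $f$, taken in the rotational order around $f$, form a closed walk $(w_1,\dots,w_{2k})$ in $G^{\ast}$, where $w_1,\dots,w_{2k}$ are the faces of $G$ sharing an edge with $F$. Let $H$ be the subgraph of $G^{\ast}$ consisting of these $2k$ edges. The first thing to check is that $H$ is in fact a cycle of length $2k$, i.e.\ that the faces $w_1,\dots,w_{2k}$ are pairwise distinct: if $w_i=w_j$ for $i\neq j$ then the two dual edges $\{f,w_i\}$ and $\{f,w_j\}$ would correspond to two distinct edges of $G$ both lying on $F$ and both separating $F$ from the same face, which — using $3$-connectivity of $G$ and hence of $G^{\ast}$, and planarity — forces a contradiction with $G$ being cyclically $4$-edge connected (a repeated neighbour of $f$ produces a cyclic $2$- or $3$-edge cut in $G$). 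I would spell this out carefully: this genuinely uses cyclic $4$-edge connectivity and is one of the two delicate points.

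Next, I would verify the hypothesis of \Cref{lemma:twistedCycle}: every pair of non-adjacent vertices $v,w$ of $H$ satisfies $\{v,w\}\notin E(G^{\ast})$. Suppose $v=w_i$ and $w=w_j$ are non-consecutive on the cycle $H$ but $\{w_i,w_j\}\in E(G^{\ast})$. Then in $G$ there is an edge $e^{\ast}$ separating the faces $w_i$ and $w_j$, while both $w_i$ and $w_j$ also border $F$; removing from $G$ the two edges separating $w_i,w_j$ from $F$ together with $e^{\ast}$ yields at most three edges whose removal disconnects $G$ into two parts each of which still contains a cycle (one part containing $F$, the other containing the rest), contradicting cyclic $4$-edge connectivity — here planarity is used to see that these three edges indeed form a cyclic edge cut. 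So \Cref{lemma:twistedCycle} applies and the embedding $\beta$ of $G$ induced by the twisted subgraph $H$ is strong. Since $H$ is a cycle, every vertex of $H$ has even degree $2$, so $H$ is an even subgraph of $G^{\ast}$, and by Enami's criterion \cite[Lemma 8]{EnamiEmbeddings} the embedding $\beta$ lies on an orientable surface.

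Finally I would pin down the genus. The embedding $\beta$ is obtained from the (spherical) planar embedding of $G$ by re-embedding across exactly the $2k$ edges dual to $H$; concretely, the face-traversal computation from the rotation system and the signature $\lambda'$ that is $-1$ precisely on these $2k$ edges shows that the facial walks of $\beta$ are exactly those of the planar embedding except that the two facial cycles $C_1,C_2$ of $\beta_G(H)$ (each equal to $(w_1,\dots,w_{2k})$) replace the $2k$ faces of $G$ incident with $F$ together with $F$ itself — that is, $2k+1$ of the original faces are merged into $2$ facial cycles. Hence the new embedding has $|V(G)|$ vertices, $|E(G)|$ edges, and $|F(G)| - (2k+1) + 2 = |F(G)| - 2k + 1$ facial cycles, where $|F(G)|$ is the number of faces of the planar $G$. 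By Euler's formula $|V(G)| - |E(G)| + |F(G)| = 2$, so the Euler characteristic of the new surface is $|V(G)| - |E(G)| + |F(G)| - 2k + 1 = 2 - 2k + 1 = 3 - 2k$, wait — recomputing: $2 - 2k + 1$ is not right, it is $2 - (2k-1) = 3-2k$; for an orientable surface of genus $g$ the Euler characteristic is $2-2g$, giving $2 - 2g = 3 - 2k$, i.e.\ $g = k - \tfrac12$, which is not an integer, signalling that I have mis-merged the faces. The correct bookkeeping is that the $2k$ faces $w_1,\dots,w_{2k}$ and the face $F$ — but $F$ is not destroyed, rather the re-embedding across the $2k$ twisted edges converts these $2k$ faces into $2$ faces and leaves $F$ transformed along with them; the clean way is to compute directly that $\beta_G(H)$ contributes $2$ facial cycles in place of the $2k+1$ faces $\{F, w_1,\dots,w_{2k}\}$ only if $F$ is among them, which it is after dualising carefully — so the count is $|F(G)| - (2k+1) + 2$, the surface has Euler characteristic $2 - (2k-1) = 3-2k$. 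The resolution of this off-by-one is exactly the technical heart of the genus computation and I would carry it out by applying the face-traversal algorithm explicitly to a local model of the $2k$-gon $F$ with its $2k$ twisted boundary edges, confirming that precisely $2k$ old faces are replaced by $2$ new ones (with $F$'s interior absorbed), giving $|F(G)| - 2k + 2$ facial cycles, Euler characteristic $2 - 2k + 2 = 4-2k$, hmm that gives $g = k-1$. So the correct statement is that $2k$ faces are replaced by $2$, i.e.\ a net loss of $2k-2$ faces, Euler characteristic drops by $2k-2$ from $2$ to $4-2k = 2-2(k-1)$, genus $k-1$. The main obstacle, then, is this careful local analysis of the face-traversal algorithm around the twisted $2k$-gon to get the face count exactly right; the connectivity arguments in the first two steps are routine once the right cyclic edge cuts are identified.
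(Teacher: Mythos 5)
Your approach is the same as the paper's: take the cycle of faces surrounding the $2k$-gonal face as the twisted subgraph, verify strongness via \Cref{lemma:twistedCycle} by showing that a chord of this cycle would yield a cyclic edge cut of size three in $G$ (the paper phrases this as a separating triangle $(v,v_i,v_j)$ in $G^{\ast}$), invoke Enami's evenness criterion for orientability, and read off the genus from the Euler characteristic. The conclusion and all the main steps are right, and your check that the $w_i$ are pairwise distinct is extra care the paper omits (it already follows from $G^{\ast}$ being simple, which holds since $G$ is $3$-connected).

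One point needs cleaning up, and it is exactly what produces your off-by-one trouble at the end. The twisted subgraph $H$ is \emph{not} ``the $2k$ dual edges incident with $f$'' --- that set is a star $K_{1,2k}$, which is neither a cycle nor an even subgraph, so neither \Cref{lemma:twistedCycle} nor Enami's criterion would apply to it. What you want (and what your later steps implicitly use) is the link of $f$ in $G^{\ast}$: the vertices $w_1,\dots,w_{2k}$ together with the edges $\{w_i,w_{i+1}\}$, which exist because $G$ is cubic, so consecutive faces around $F$ share the third edge at each boundary vertex of $F$. Dually, the twisted edges of $G$ are then the $2k$ ``spoke'' edges emanating from the boundary of $F$, not the boundary edges of $F$ themselves. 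Since no boundary edge of $F$ is twisted, the facial walk of $F$ survives the re-embedding unchanged; the only affected faces are $w_1,\dots,w_{2k}$ (each uses two spokes), and these merge into the two facial cycles of $\beta_G(H)$. Hence exactly $2k$ faces become $2$, the Euler characteristic is $2-2k+2=4-2k$, and the genus is $k-1$; no local face-traversal model is needed once the correct edges are twisted.
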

\begin{proof}
    In the following, we read all subscripts modulo $2k$.
    Since $G$ has a face of length $2k$, we know that there exists a vertex $v\in V(G^{\ast})$ with $\deg_{G^\ast}(v)=2k\geq 4$.
    Let $v_1,\dots,v_{2k}$ be exactly the vertices adjacent to $v$ with $e_i=\{v_i,v_{i+1}\}$ for $1\leq i \leq 2k$ being edges in $G^\ast$, and $H$ the subgraph of $G^\ast$ defined by these vertices and edges. Since $H$ forms a cycle of length $2k$, it forms an even subgraph of $G^\ast$.
    Thus, the embedding defined by $H$ is orientable by \cite[Lemma 8]{EnamiEmbeddings} and it remains to show that it is strong. This is achieved by exploiting \Cref{lemma:twistedCycle}.
    For this, we assume that $v_i$ and $v_j$ are vertices in $V(H)$ satisfying $\{v_i,v_j\}\in E(G^\ast)$ and $\{v_i,v_j\}\notin E(H).$ Then $(v,v_i,v_j)$ forms a separating triangle in $G^{\ast}$ and thus $G$ cannot be cyclically 4-edge connected. Hence, the embedding induced by $H$ is strong.
    Moreover, the constructed embedding has transformed $2k$ faces of the planar embedding of $G$ into two new facial cycles. 
    Therefore, the new embedding of $G$ has an Euler characteristic of $2-2k+2=4-2k$, which means its genus is $k-1$. This concludes the proof.
\end{proof}

\begin{proposition}\label{prop:odd}
Let G be a cyclically 5-edge connected cubic planar graph. Furthermore, let $v$ and $w$ be adjacent vertices in $G^\ast$ with $\deg(v)=2k+1$ and $\deg(w)=2m+1$ for $k,m\geq 2$.
Then $G$ has a strong embedding on an orientable surface of genus $k+m-2$.
\end{proposition}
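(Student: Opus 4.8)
The plan is to build an explicit even twisted subgraph $H$ of $G^{\ast}$ that is a single even cycle, in direct analogy with \Cref{prop:even}, and then to apply \Cref{lemma:twistedCycle} together with \cite[Lemma 8]{EnamiEmbeddings}. Since $G^{\ast}$ is a planar triangulation, the edge $\{v,w\}$ lies on exactly two facial triangles, say $(v,w,x)$ and $(v,w,y)$; hence the common neighbours of $v$ and $w$ in $G^{\ast}$ are exactly $x$ and $y$, with $x\neq y$ and $x,y\notin\{v,w\}$. Let $L_v$ be the cycle on $N(v)$ given by the cyclic order of the neighbours of $v$, and likewise $L_w$ on $N(w)$; these have lengths $2k+1$ and $2m+1$. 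In $L_v$ the vertex $w$ is flanked by $x$ and $y$, so deleting $w$ from $L_v$ leaves an $x$--$y$ path $P_v$ on $N(v)\setminus\{w\}$ with $2k-1$ edges; symmetrically, deleting $v$ from $L_w$ leaves an $x$--$y$ path $P_w$ on $N(w)\setminus\{v\}$ with $2m-1$ edges. Because $N(v)\cap N(w)=\{x,y\}$ and $k,m\geq 2$, the paths $P_v$ and $P_w$ meet exactly in $\{x,y\}$ and share no edge, so $H:=P_v\cup P_w$ is a cycle of even length $2(k+m-1)$ with $|V(H)|=2k+2m-2$. As an even subgraph of $G^{\ast}$, $H$ defines an embedding of $G$ on an orientable surface by \cite[Lemma 8]{EnamiEmbeddings}.

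I would then check that this embedding is strong via \Cref{lemma:twistedCycle}: since $k+m-1\geq 3$, it suffices to show that any two vertices of $H$ that are non-adjacent in $H$ are non-adjacent in $G^{\ast}$. Split $V(H)$ into $A:=N(v)\setminus\{w,x,y\}$, $B:=N(w)\setminus\{v,x,y\}$ and $\{x,y\}$, and suppose $\{a,b\}\in E(G^{\ast})$ with $a,b\in V(H)$ non-adjacent in $H$. If $a$ and $b$ both lie in $N(v)$ — this covers every case except $a\in A$, $b\in B$ — then $(v,a,b)$ is a triangle of the triangulation $G^{\ast}$; if it is separating then $G$ is not cyclically $4$-edge connected (as used in \Cref{prop:even}), and if it is facial then $\{a,b\}\in E(L_v)$, which, since $a,b\neq w$, means $\{a,b\}$ is an edge of $P_v\subseteq H$, contradicting non-adjacency in $H$. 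The case $a,b\in N(w)$ is symmetric. This leaves $a\in A$, $b\in B$: then $v-a-b-w-v$ is a $4$-cycle $C$ in $G^{\ast}$, and its two diagonals $\{v,b\}$ and $\{a,w\}$ are non-edges of $G^{\ast}$ (as $b\notin N(v)$ and $a\notin N(w)$), so neither of the two disks bounded by $C$ consists of just two triangles; each therefore carries an interior vertex and hence at least four triangular faces. Passing to the dual, $C$ is a minimal $4$-edge cut of $G$ whose two sides each contain at least four vertices of $G$, and a vertex set $S$ with $|S|\geq 4$ that is left by only four edges spans $\tfrac{1}{2}(3|S|-4)\geq|S|$ edges inside $G$, hence a cycle; so $C$ yields a cyclic $4$-edge cut of $G$, contradicting cyclic $5$-edge connectivity. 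Thus no such pair exists and the embedding is strong.

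It then remains to determine the genus, which proceeds exactly as in \Cref{prop:even}: $H$ is an even cycle, so $\beta_G(H)$ has exactly two facial cycles, and the re-embedding of $G$ turns the $|V(H)|=2k+2m-2$ faces corresponding to $V(H)$ into two new facial cycles. The new embedding therefore has Euler characteristic $2-(2k+2m-2)+2=6-2(k+m)$ and genus $k+m-2$.

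The main obstacle is the last case of the strongness verification — excluding an edge $\{a,b\}$ with $a\in A$, $b\in B$. Here the separating-triangle argument of \Cref{prop:even} is not available, and one really must use the full cyclic $5$-edge connectivity, extracted from the $4$-cycle $C$ via the planar duality between cycles of $G^{\ast}$ and minimal edge cuts of $G$. A smaller but essential point is to check that $P_v$ and $P_w$ glue along exactly the two vertices $x,y$ and no edge, so that $H$ is genuinely a single even cycle; this is precisely where the hypothesis $k,m\geq 2$ enters.
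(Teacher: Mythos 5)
Your proof is correct and follows essentially the same route as the paper: the twisted subgraph is the even cycle bounding the union of the stars of $v$ and $w$, strongness is checked via \Cref{lemma:twistedCycle} with the same two-case split (a separating triangle through $v$ or $w$ versus a separating $4$-cycle through both, the latter contradicting cyclic $5$-edge connectivity), and the genus computation is identical. You additionally verify two details the paper leaves implicit --- that $P_v\cup P_w$ is genuinely a single even cycle because $N(v)\cap N(w)=\{x,y\}$, and that the $4$-cycle $(v,a,b,w)$ really induces a cyclic $4$-edge cut of $G$ --- but this is added care within the same argument, not a different approach.
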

\begin{proof}
Let $v_1,\dots,v_{2k+2m-2}$ be exactly the adjacent vertices of $v$ and $w$ that are distinct from $v$ and $w$ with $e_i=\{v_i,v_{i+1}\}$ for $1\leq i \leq 2k+2m-2$ being edges in $G^\ast$, where we read the subscripts modulo $2k+2m-2$.
Furthermore, let $H$ be the subgraph defined by these vertices and the edges $\{e_1,\dots,e_n\}$.
Hence, $H$ forms an even subgraph of $G^{\ast}$ and in particular a cycle of even length. Thus, the embedding of $G$ defined by $H$ is orientable, and to show that it is a strong embedding we exploit \Cref{lemma:twistedCycle}.
For this, we assume that $v_i$ and $v_j$ are vertices in $V(H)$ satisfying $\{v_i,v_j\}\in E(G^\ast)$ and $\{v_i,v_j\}\notin E(H).$
We have to distinguish two different cases:
\begin{itemize}
    \item[1.] If $v_i$ and $v_j$ are both adjacent to a vertex $v'\in\{v,w\}$, then $(v',v_i,v_j)$ is a separating triangle in $G^{\ast}$ and thus $G$ cannot be cyclically 4-edge connected. 
    \item[2.] If $v_i$ is adjacent to $v$ and $v_j$ to $w$, then $(v,v_i,v_j,w)$ is a separating cycle of length four in $G^{\ast}$ and therefore $G$ cannot be cyclically 5-edge connected. 
\end{itemize}
Thus, the embedding of $G$ induced by $H$ is strong.  Moreover, the Euler characteristic of the new embedding of $G$ is $2-(2k+2m-2)+2=6-2k-2m$, which means that its genus is $k+m-2$.
\end{proof}

\begin{theorem}\label{corollary:orientable}
    Let $G$ be a cubic planar graph. If $G$ is cyclically 4-edge connected, then $G$ has a strong embedding on an orientable surface of positive genus.
\end{theorem}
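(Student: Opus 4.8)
The plan is to reduce the general statement to the two propositions already established. Let $G$ be a cyclically $4$-edge connected cubic planar graph, and consider its dual $G^\ast$, which is a planar triangulation. First I would split into cases according to whether $G$ has a face of even length. If some face of $G$ has even length $2k$, then since $G$ is cyclically $4$-edge connected it cannot have a face of length $2$ (a digon would yield a $2$-edge cyclic cut, or rather would contradict simplicity anyway), so $k\ge 2$, and \Cref{prop:even} immediately produces a strong embedding on an orientable surface of genus $k-1\ge 1$.

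The remaining case is that every face of $G$ has odd length, equivalently every vertex of the triangulation $G^\ast$ has odd degree. Here I would first note that $G$ cannot be $3$-regular-like trivial: in a planar triangulation all vertices have degree $\ge 3$, and if every degree is odd then every degree is $\ge 3$ and in fact we need a vertex of degree $\ge 5$ somewhere. The key sub-step is to locate two adjacent vertices $v,w$ in $G^\ast$, both of degree $\ge 5$ (hence of odd degree $2k+1$ and $2m+1$ with $k,m\ge 2$), and to argue that $G$ is actually cyclically $5$-edge connected, so that \Cref{prop:odd} applies and yields genus $k+m-2\ge 2\ge 1$. For the cyclic $5$-edge connectivity: a cyclic edge cut of size $4$ in a cubic graph corresponds dually to a cycle of length $4$ in $G^\ast$ separating the triangulation into two parts each carrying a face of $G$; since $G$ has no face of length $\le 3$ is automatic, and a separating $4$-cycle in a triangulation in which all faces are triangles... here I must be careful — I would instead argue directly that cyclic $4$-edge-connectivity already suffices, re-examining the proof of \Cref{prop:odd}: case~2 there used cyclic $5$-edge connectivity only to exclude a separating $4$-cycle $(v,v_i,v_j,w)$, but such a $4$-cycle through the edge $vw$ in a triangulation forces one of its two sides to contain only that edge's two triangular faces, contradicting that both sides contain a face of $G$ of length $\ge 5$. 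So effectively cyclic $4$-edge connectivity plus the odd-face hypothesis is enough.

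The main obstacle I anticipate is the existence part of locating a suitable vertex (or adjacent pair of vertices) of large degree in the all-odd-degree case. By Euler's formula, a triangulation on $n$ vertices has average degree $6-12/n<6$, so not every vertex can have degree $\ge 5$; but it cannot be that \emph{every} vertex has degree $3$ either, since $K_4$ is the only such triangulation and its dual is $K_4$, which is cyclically $3$-edge connected but not cyclically $4$-edge connected (every triangular face gives a $3$-edge cut). More generally, if every vertex of $G^\ast$ had degree $3$, then $G^\ast=K_4$; if the only odd degrees present were $3$, one still gets $G^\ast=K_4$. Since $G$ is cyclically $4$-edge connected, $G\ne$ dual of $K_4$, so $G^\ast$ has a vertex $v$ of odd degree $\ge 5$. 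To get an \emph{adjacent} pair both of degree $\ge 5$: if every neighbour of $v$ had degree $3$, then each such neighbour, having degree $3$ in the triangulation, lies in exactly three triangles all containing $v$, which quickly forces $G^\ast$ to be small and not cyclically $4$-edge connected (one obtains a separating triangle or a $3$-edge cut in $G$). Ruling this out rigorously is the delicate point; once it is done, \Cref{prop:even} and \Cref{prop:odd} close the argument, and in every case the genus produced is at least $1$, as required.
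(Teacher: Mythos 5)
There is a genuine gap in your all-odd-faces case, precisely where you try to weaken the hypothesis of \Cref{prop:odd} from cyclic $5$-edge connectivity to cyclic $4$-edge connectivity. Your justification is that a separating $4$-cycle $(v,v_i,v_j,w)$ through the edge $\{v,w\}$ of the triangulation $G^\ast$ ``forces one of its two sides to contain only that edge's two triangular faces.'' That is not true: the two triangular faces incident to $\{v,w\}$ lie on \emph{opposite} sides of this $4$-cycle, and when $v_i$ is not consecutive to the $v$--$w$ fan in the link of $v$ (equivalently, when $\{v_i,v_j\}\notin E(H)$, which is exactly the situation in case~2 of \Cref{prop:odd}), both sides of $(v,v_i,v_j,w)$ can contain internal vertices of $G^\ast$. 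In that situation the $4$-cycle dualises to a genuine cyclic $4$-edge cut of $G$, which cyclic $4$-edge connectivity does not forbid, so your contradiction evaporates. This is exactly why the paper keeps the cyclic $5$-edge connectivity hypothesis in \Cref{prop:odd} and disposes of the graphs that are cyclically $4$- but not $5$-edge connected by a separate external result: such graphs always have a strong embedding on the torus \cite[Corollary 29]{PaperMeikeStrong}. Your argument has no substitute for that ingredient, and the class it must cover (cyclically $4$- but not $5$-edge connected cubic planar graphs with all faces of odd length) is not empty, so the case cannot simply be waved away.

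By contrast, the point you flag as ``delicate'' --- locating two adjacent vertices of $G^\ast$ of odd degree at least $5$ --- is actually immediate and needs none of your degree-counting: for $G\not\cong K_4$, a triangular face of $G$ yields a cyclic $3$-edge cut, so cyclic $4$-edge connectivity already forces every face to have length at least $4$; combined with the all-odd assumption, every vertex of $G^\ast$ has degree at least $5$, and \emph{any} adjacent pair will do. (This is the observation the paper uses, in the form ``every face of $G$ is of length at least five'' once cyclic $5$-edge connectivity is available.) So the correct repair of your proof is not to strengthen your analysis of degree-$3$ vertices, but to supply the missing torus embedding for the cyclically $4$- but not $5$-edge connected case.
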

\begin{proof}
    If $G$ is cyclically $4$-edge connected but not cyclically $5$-edge connected then $G$ has a strong embedding on the torus, as shown in~\cite[Corollary 29]{PaperMeikeStrong}. Additionally, if $G$ has a face of even length, then $G$ has a strong embedding on an orientable surface by \Cref{prop:even}. Therefore, we can assume that $G$ is cyclically $5$-edge connected and contains only faces of odd length. Since $G$ is cyclically 5-edge connected, every face of $G$ in its planar embedding is of length at least five. Hence, the result follows with \Cref{prop:odd}.
\end{proof}

\subsection{Data on 3-Connected Cubic Planar Graphs}
A $3$-connected cubic planar graph $G$ can have different inequivalent strong embeddings on surfaces of positive genera. Note that it is possible that some of these inequivalent embeddings are isomorphic. We therefore characterise the isomorphism classes of strong embeddings of $G$ on some surfaces of positive genera in this section.
These results are used to create a data base containing exactly one representatives from each isomorphism class of strong embeddings of 3-connected cubic planar graphs with up to $22$ vertices on the projective plane, the torus and the Klein bottle.
First, we relate isomorphic strong embeddings to certain orbits under $\Aut(G^\ast)$ acting on the subgraphs of $G^\ast.$
\begin{lemma}\label{lemma:isomorphic}
    Let $G$ be a $3$-connected cubic planar graph and $H_1$, and $H_2$ twisted subgraphs of $G^{\ast}$ inducing strong embeddings of $G$.
    The two strong embeddings of $G$ defined by $H_1$ and $H_2$ are isomorphic if and only if $H_1$ and $H_2$ lie in the same $\Aut(G^{\ast})$-orbit.
\end{lemma}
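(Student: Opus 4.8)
The plan is to establish both directions by translating between the combinatorial data of a strong embedding (its set of facial cycles) and the action of $\Aut(G^{\ast})$ on twisted subgraphs, using the fact that $\Aut(G^{\ast})\cong\Aut(G)$ for a $3$-connected planar graph $G$ (Whitney's unique embedding theorem), and that this isomorphism is induced by the planar duality: a face of the planar embedding of $G$ corresponds to a vertex of $G^{\ast}$, and an automorphism $\varphi$ of $G$ permutes the faces of the planar embedding exactly as the corresponding $\varphi^{\ast}\in\Aut(G^{\ast})$ permutes the vertices of $G^{\ast}$. The key compatibility I would record first is: for any twisted subgraph $H$ of $G^{\ast}$ and any $\varphi\in\Aut(G)$ with dual $\varphi^{\ast}\in\Aut(G^{\ast})$, the embedding of $G$ induced by $\varphi^{\ast}(H)$ is the image under $\varphi$ of the embedding induced by $H$; equivalently, the set of facial cycles of the $\varphi^{\ast}(H)$-embedding equals the $\varphi$-image of the set of facial cycles of the $H$-embedding. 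This is because the signature $\lambda'$ attached to $H$ sends an edge $e\in E(G)$ to $-1$ iff its dual lies in $H$, so the signature attached to $\varphi^{\ast}(H)$ sends $e$ to $-1$ iff $\varphi^{-1}(e)$ is twisted by $\lambda'$, i.e.\ the two signatures are related by the relabelling $\varphi$; since the rotation system of the planar embedding of $G$ is itself $\varphi$-invariant (again by uniqueness of the planar embedding), the face traversal algorithm commutes with $\varphi$.

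Granting that compatibility statement, the lemma is almost immediate. For the ``if'' direction: suppose $H_2=\varphi^{\ast}(H_1)$ for some $\varphi^{\ast}\in\Aut(G^{\ast})$, and let $\varphi\in\Aut(G)$ be its dual. By the compatibility, the set of facial cycles of the $H_2$-embedding is the $\varphi$-image of the set of facial cycles of the $H_1$-embedding, so the two sets of facial cycles lie in the same $\Aut(G)$-orbit; by the definition of isomorphic strong embeddings recalled in the preliminaries, the two embeddings are isomorphic. For the ``only if'' direction: suppose the embeddings induced by $H_1$ and $H_2$ are isomorphic, so there is $\varphi\in\Aut(G)$ carrying the facial cycles of the first onto those of the second. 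Let $H_1':=\varphi^{\ast}(H_1)$ where $\varphi^{\ast}\in\Aut(G^{\ast})$ is the dual of $\varphi$. By compatibility, $H_1'$ induces an embedding whose facial cycles are the $\varphi$-image of those of $H_1$, which by hypothesis is exactly the set of facial cycles of the $H_2$-embedding. So $H_1'$ and $H_2$ are twisted subgraphs of $G^{\ast}$ inducing the \emph{same} (equivalent) strong embedding of $G$, and it remains to deduce $H_1'=H_2$. This is where I would invoke the one-to-one correspondence of \Cref{theorem:reembedding}: inequivalent strong embeddings of $G$ on the projective plane, torus, or Klein bottle correspond bijectively to the listed isomorphism types of subgraphs of $G^{\ast}$, and in particular the twisted subgraph is recovered from the embedding via $\lambda'(e)=-1 \iff e^{\ast}\in E(H)$ — the set of twisted edges of the embedding is intrinsic to the embedding, hence determines $E(H)$, hence $H$. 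Thus $H_1'=H_2$, i.e.\ $H_2=\varphi^{\ast}(H_1)$, so $H_1$ and $H_2$ are in the same $\Aut(G^{\ast})$-orbit.

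The main obstacle, and the step deserving the most care, is the compatibility statement in the first paragraph: one must be precise that (i) $\Aut(G)$ and $\Aut(G^{\ast})$ are canonically identified via planar duality, (ii) an automorphism of $G$ fixes the (unique) planar rotation system up to the global orientation-reversal that duality already accounts for, and (iii) the face traversal algorithm is equivariant under simultaneously relabelling the graph, its rotation system, and its signature by $\varphi$. Points (i) and (ii) are consequences of Whitney's theorem; point (iii) is a formal property of the combinatorial description of embeddings by rotation systems and signatures as set out in \cite{TopologicalGraphTheory,GraphsOnSurfaces,PaperMeikeStrong}. A subtlety worth flagging is orientation: relabelling by $\varphi$ may compose with a global reversal of all rotations, but since reversing all rotations while keeping the signature fixed produces the mirror embedding with the \emph{same} set of facial cycles, this does not affect the argument. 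Once these points are in place, everything else is bookkeeping with the definitions of equivalent and isomorphic embeddings and an appeal to \Cref{theorem:reembedding} to turn ``same embedding'' into ``same twisted subgraph''.
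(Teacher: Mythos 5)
Your proposal is correct and follows essentially the same route as the paper's proof: both directions rest on the canonical identification of $\Aut(G)$ with $\Aut(G^{\ast})$, the fact that automorphisms of $G$ preserve the planar rotation system, and the resulting equivariance between mapping the twisted edge set and mapping the facial cycles. Your write-up is somewhat more explicit than the paper's about the final recovery step (deducing $H_1'=H_2$ from equivalence of the induced embeddings), which the paper asserts with the same level of justification in the line ``This implies $\phi(\mathcal{T}^\ast_1)=\mathcal{T}^\ast_2$.''
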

\begin{proof}
    Let $\beta_1(G)$ and $\beta_2(G)$ be the strong embeddings of $G$ defined by $H_1$ and $H_2$, respectively. Moreover, let $\mathcal{T}_1$ and $\mathcal{T}_2$ be the edges of $H_1$ and $H_2$, where the dual edges of $\mathcal{T}_1$ and $\mathcal{T}_2$ are denoted by $\mathcal{T}^\ast_1$ and $\mathcal{T}^\ast_2$. Additionally, let $\rho$ be the rotation system of the planar embedding of $G$.
    
    First, we assume that $H_1$ and $H_2$ lie in the same orbit under $\Aut(G^{\ast})$. Thus, there exists an automorphism $\phi^{\ast}\in\Aut(G^{\ast})$ such that $\phi^{\ast} (V(H_1))=V(H_2)$ and $\phi^{\ast} (\mathcal{T}_1)=\mathcal{T}_2$. This means that $\phi^{\ast}$ induces an automorphism $\phi$ of $G$ with $\phi(\mathcal{T}^\ast_1)=\mathcal{T}^\ast_2$. Furthermore, the image of each face of the planar embedding of $G$ under $\phi$ has to be a face of $G$.
    Thus, for all edges $e_1,e_2\in E(G)$ both incident to $v\in V(G)$ with $\rho_v(e_1)=e_2$, we know that $\rho_{\phi(v)}(\phi(e_1))=\phi(e_2)$ holds. This implies that $\phi$ preserves the rotation system $\rho$ of $G$.
    Since $\phi(\mathcal{T}^\ast_1)=\mathcal{T}^\ast_2$ it follows that $\phi$ maps the facial cycles of $\beta_1(G)$ to the facial cycles of $\beta_2(G)$.\\
    Now, let $\phi\in \Aut(G)$ be an automorphism that maps the facial cycles of $\beta_1(G)$ to the facial cycles of $\beta_2(G)$. Thus, the rotation system of $G$ is preserved by $\phi$. This implies $\phi(\mathcal{T}^\ast_1)=\mathcal{T}^\ast_2$ and therefore there exists an automorphism $\phi^\ast\in \Aut(G^\ast)$ that satisfies $\phi^{\ast}(V(H_1))=V(H_2)$ and $\phi^{\ast} (\mathcal{T}_1)=\mathcal{T}_2$.
\end{proof}
We obtain the following two theorems as a direct consequence of the above lemma and the characterisation of Weiß et al.\ \cite[Theorem 1]{PaperMeikeStrong}.  
\begin{theorem}\label{theorem:nonIsomorphic}
    Let $G$ be a 3-connected cubic planar graph. 
    There exists a one-to-one correspondence between the isomorphism classes of strong embeddings of $G$ on
    \begin{itemize}
        \item[1)] the projective plane and the $\Aut(G^{\ast})$-orbits of subgraphs of $G^{\ast}$ that are isomorphic to $K_4$.
        \item[2)] the torus and the $\Aut(G^{\ast})$-orbits of subgraphs of $G^{\ast}$ that are isomorphic to $K_{2,2,2}$ or $K_{2,2m}$ for $m\geq 1$, where for $K_{2,2m}$ the vertices in the partition sets of size two are not adjacent in $G^{\ast}$.
        \item[3)] the Klein bottle and the $\Aut(G^{\ast})$-orbits of subgraphs of $G^{\ast}$ that are isomorphic to $A_3, A_5, A_6$ or $K_{2,2m-1}$ for $m\geq 2$, where for $K_{2,2m-1}$ the vertices in the partition sets of size two are not adjacent in $G^{\ast}$.
    \end{itemize}
\end{theorem}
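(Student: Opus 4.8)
The plan is to deduce \Cref{theorem:nonIsomorphic} from \Cref{theorem:reembedding} together with \Cref{lemma:isomorphic}, by passing from equivalence classes to isomorphism classes of strong embeddings; no new embedding-theoretic input is needed. Fix one of the three surfaces $S$ occurring in the statement, and let $\mathcal{H}_S$ denote the family of subgraphs of $G^\ast$ singled out by the corresponding clause of \Cref{theorem:reembedding}: subgraphs isomorphic to $K_4$ for the projective plane; isomorphic to $K_{2,2,2}$ or to $K_{2,2m}$ (with the prescribed non-adjacency in $G^\ast$) for the torus; isomorphic to $A_3$, $A_5$, $A_6$ or to $K_{2,2m-1}$ (with the prescribed non-adjacency) for the Klein bottle. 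By \Cref{theorem:reembedding}, sending a subgraph $H\in\mathcal{H}_S$ to the equivalence class of the strong embedding of $G$ induced by the twisted subgraph $H$ defines a bijection $\Phi$ from $\mathcal{H}_S$ onto the set of equivalence classes of strong embeddings of $G$ on $S$.

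First I would verify that $\mathcal{H}_S$ is invariant under the action of $\Aut(G^\ast)$ on subgraphs of $G^\ast$: an automorphism $\phi^\ast$ maps a subgraph $H$ to an isomorphic subgraph $\phi^\ast(H)$ and preserves adjacency and non-adjacency of vertices within $G^\ast$, hence preserves every defining condition of $\mathcal{H}_S$, so $\phi^\ast(H)\in\mathcal{H}_S$ whenever $H\in\mathcal{H}_S$; thus $\mathcal{H}_S$ is a disjoint union of $\Aut(G^\ast)$-orbits. Next I would recall that, by definition, two strong embeddings of $G$ are isomorphic exactly when their facial-cycle sets lie in a common $\Aut(G)$-orbit, so that the isomorphism classes of strong embeddings on $S$ are obtained by amalgamating the equivalence classes of such embeddings under the isomorphism relation. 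Transporting this relation through $\Phi$: for $H_1,H_2\in\mathcal{H}_S$, the strong embeddings represented by $\Phi(H_1)$ and $\Phi(H_2)$ are isomorphic if and only if $H_1$ and $H_2$ lie in the same $\Aut(G^\ast)$-orbit --- this is exactly the content of \Cref{lemma:isomorphic}. Consequently $\Phi$ identifies the partition of $\mathcal{H}_S$ into $\Aut(G^\ast)$-orbits with the partition of the equivalence classes of strong embeddings of $G$ on $S$ into isomorphism classes, and therefore descends to a bijection between the $\Aut(G^\ast)$-orbits of members of $\mathcal{H}_S$ and the isomorphism classes of strong embeddings of $G$ on $S$. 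Reading this off for $S$ equal to the projective plane, the torus and the Klein bottle yields clauses 1), 2) and 3).

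I expect the only step that needs genuine care --- with everything else being bookkeeping (well-definedness of the embedding induced by a twisted subgraph, the fact that being isomorphic and lying in a common orbit are honest equivalence relations, and the injectivity of $\Phi$ already contained in \Cref{theorem:reembedding}) --- to be the invariance claim, namely that $\mathcal{H}_S$ is truly a union of $\Aut(G^\ast)$-orbits rather than merely being mapped into the class of subgraphs inducing strong embeddings on $S$. The reason this goes through is that \Cref{theorem:reembedding} characterises $\mathcal{H}_S$ purely in terms of the abstract isomorphism type of a subgraph together with adjacency relations internal to $G^\ast$, both of which are automorphism-invariant, so that no automorphism of $G^\ast$ can carry a member of $\mathcal{H}_S$ to a subgraph outside the prescribed list.
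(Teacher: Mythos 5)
Your proposal is correct and follows exactly the route the paper intends: the paper derives \Cref{theorem:nonIsomorphic} as a direct consequence of \Cref{theorem:reembedding} and \Cref{lemma:isomorphic}, without writing out a separate proof. Your additional check that the relevant family of subgraphs is $\Aut(G^{\ast})$-invariant is a sensible piece of bookkeeping that the paper leaves implicit.
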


\begin{theorem}
    Let $G$ be a 3-connected cubic planar graph. 
    \begin{itemize}
        \item[1)] If $G$ is cyclically 4-edge connected, then the $\Aut(G)$-orbits of faces of length $2k$ for $k\geq 2$ induce non-isomorphic strong embeddings on orientable surfaces of genus $k-1$.
        \item[2)] If $G$ is cyclically 5-edge connected, then the $\Aut(G)$-orbits of two neighbouring faces of length $2k+1$ and  $2m +1$ for $k,m\geq 2$ induce non-isomorphic strong embeddings on orientable surfaces of genus $m+k-2$.
    \end{itemize}
\end{theorem}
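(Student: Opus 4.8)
The plan is to reduce the statement to a question about orbits of $\Aut(G^{\ast})$ and then answer that question using the explicit twisted subgraphs of \Cref{prop:even} and \Cref{prop:odd} together with the orbit criterion of \Cref{lemma:isomorphic}. Since $G$ is $3$-connected and planar, Whitney's unique embedding theorem~\cite{Whitney} yields a natural isomorphism $\Aut(G)\cong\Aut(G^{\ast})$ intertwining the $\Aut(G)$-action on the faces of $G$ with the $\Aut(G^{\ast})$-action on the vertices of $G^{\ast}$: a face of length $\ell$ corresponds to a vertex of degree $\ell$, and two faces are adjacent if and only if the corresponding vertices are. Hence two faces of $G$ (resp.\ two neighbouring pairs of faces) lie in a common $\Aut(G)$-orbit if and only if the corresponding vertices (resp.\ edges) of $G^{\ast}$ lie in a common $\Aut(G^{\ast})$-orbit. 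Combined with \Cref{lemma:isomorphic}, the theorem then follows once I show that the twisted subgraph assigned to a face by \Cref{prop:even} (resp.\ to a neighbouring pair by \Cref{prop:odd}) depends, up to $\Aut(G^{\ast})$, only on the $\Aut(G^{\ast})$-orbit of the corresponding vertex (resp.\ edge), and conversely that its $\Aut(G^{\ast})$-orbit determines that orbit. Faces of different even lengths $2k$ produce embeddings on surfaces of different genera $k-1$ by \Cref{prop:even}, so in part~1 it suffices to compare faces of the same length.

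For part~1, let $f$ be a face of length $2k$ with dual vertex $v_f$ (so $\deg_{G^{\ast}}(v_f)=2k$) and let $H_f$ be the link cycle on $N_{G^{\ast}}(v_f)$ used in \Cref{prop:even}. As $G$ is cyclically $4$-edge connected, $G^{\ast}$ has no separating triangle, so $H_f$ has no chord and its edge set is exactly the set of edges of $G^{\ast}$ with both endpoints in $N_{G^{\ast}}(v_f)$. Therefore any $\phi^{\ast}\in\Aut(G^{\ast})$ with $\phi^{\ast}(v_f)=v_{f'}$ satisfies $\phi^{\ast}(H_f)=H_{f'}$, which via \Cref{lemma:isomorphic} gives one direction. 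Conversely, suppose $\phi^{\ast}(H_f)=H_{f'}$ and put $u:=\phi^{\ast}(v_f)$; then $N_{G^{\ast}}(u)=\phi^{\ast}(V(H_f))=V(H_{f'})=N_{G^{\ast}}(v_{f'})$, so $u$ and $v_{f'}$ have the same neighbour set of size $2k$. If $u=v_{f'}$ we are done. Otherwise the $2k$-cycle $C:=H_{f'}$ separates the sphere into two disks; since each of $u$ and $v_{f'}$ is joined to all of $C$ and to nothing else, and since the faces of the triangulation $G^{\ast}$ around such a vertex are precisely the $2k$ triangles on its consecutive neighbours, each disk carries no interior vertex besides that one apex. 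Thus $G^{\ast}$ is the $2k$-bipyramid over $C$, and its equatorial reflection $\psi\in\Aut(G^{\ast})$ fixes $C$ vertexwise and interchanges $u$ with $v_{f'}$, so $\psi\circ\phi^{\ast}$ carries $v_f$ to $v_{f'}$. In every case $v_f$ and $v_{f'}$ lie in one $\Aut(G^{\ast})$-orbit, as needed.

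For part~2, $G$ is cyclically $5$-edge connected, so it has no face of length $3$ or $4$ (the boundary of such a face would be a cyclic edge cut of size at most $4$) and hence $G^{\ast}$ has minimum degree at least $5$. Given neighbouring faces $f,g$ of lengths $2k+1$ and $2m+1$ with $k,m\geq 2$, let $\{v,w\}\in E(G^{\ast})$ be dual to the shared edge, with $\deg_{G^{\ast}}(v)=2k+1$ and $\deg_{G^{\ast}}(w)=2m+1$, and let $H_{\{f,g\}}$ be the even cycle on $(N_{G^{\ast}}(v)\cup N_{G^{\ast}}(w))\setminus\{v,w\}$ from \Cref{prop:odd}. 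As $G^{\ast}$ has no separating triangle and no separating quadrilateral, $H_{\{f,g\}}$ is an induced cycle, so exactly as above any $\phi^{\ast}\in\Aut(G^{\ast})$ with $\phi^{\ast}(\{v,w\})=\{v',w'\}$ satisfies $\phi^{\ast}(H_{\{f,g\}})=H_{\{f',g'\}}$, giving one direction via \Cref{lemma:isomorphic}. For the converse, assume $\phi^{\ast}(H_{\{f,g\}})=H_{\{f',g'\}}$. In the triangulation $G^{\ast}$ the cycle $H_{\{f,g\}}$ bounds two disks: the ``lens'' $D$, whose interior is exactly $\{v,w\}$ together with the edge $vw$ and the two triangle fans at $v$ and $w$, and its complement, which has at least three interior vertices --- for otherwise $G^{\ast}$ would have minimum degree at most $4$, contradicting the bound above. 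Since $\phi^{\ast}$ sends the pair of disks bounded by $H_{\{f,g\}}$ onto the pair bounded by $H_{\{f',g'\}}$, and only one disk on each side is a lens with exactly two interior vertices, $\phi^{\ast}(D)$ must be the corresponding lens $D'$; hence $\phi^{\ast}(\{v,w\})=\operatorname{int}(D')=\{v',w'\}$, so $\{v,w\}$ and $\{v',w'\}$ lie in one $\Aut(G^{\ast})$-orbit.

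I expect the main obstacle to be the converse direction in both parts: recovering the underlying datum --- a vertex of $G^{\ast}$ in part~1, an edge in part~2 --- from the twisted cycle alone. This is precisely where the connectivity hypotheses are essential: they force the twisted cycle to be an induced separating cycle whose two sides are distinguishable, so that an isomorphism between two such twisted subgraphs must respect the datum. The lone genuine exception is the family of $2k$-bipyramids in part~1, which I would handle directly by invoking their apex-interchanging symmetry.
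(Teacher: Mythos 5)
Your argument is correct and follows the route the paper intends: it reduces the statement to \Cref{lemma:isomorphic} applied to the link cycles constructed in \Cref{prop:even} and \Cref{prop:odd} (the paper states this theorem as a direct consequence of those results and supplies no further proof). The only substantive content you add is the converse recovery of the face (resp.\ the edge) from its link cycle via the chordlessness forced by the connectivity hypotheses, including the bipyramid exception in part~1, and you handle that correctly.
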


These theorems allow us to employ GAP \cite{GAP4} in order to compute representatives of the isomorphism classes of strong embeddings of $3$-connected cubic planar graphs on up to $22$ vertices on the projective plane, the torus or the Klein bottle. In order to present our compiled data in \Cref{table:Embeddings}, we denote the number of isomorphism classes of strong embeddings of 3-connected cubic planar graphs with $n$ vertices on the projective plane, the torus, and the Klein bottle by $p_n$, $t_n$ and $k_n$, respectively.
\begin{table}[H]
    \centering
    \begin{tabular}[h]{|c|c|c|c|c|c|c|c|c|c|c|c|}
    \hline
    \textbf{$n$} & \textbf{4} &  \textbf{6}& \textbf{8} &\textbf{10} &\textbf{12} & \textbf{14}&\textbf{16}&\textbf{18}&\textbf{20}&\textbf{22}\\
     \hline
     $p_n$ & 1& 1& 2& 7& 31& 152& 917& 5914& 40633 & 290295 \\
     \hline
     $t_n$ & 0& 0& 3& 5& 33& 156& 1084& 7456& 55885 & 425789 \\
     \hline
     $k_n$ & 0& 1& 4& 19& 103& 629& 4253& 30505& 226806 & 1626611\\
     \hline
\end{tabular}
\caption{Numbers of isomorphism classes of strong embeddings on the projective plane ($p_n$), the torus ($t_n$) and the Klein bottle ($k_n$) of 3-connected cubic planar graphs with $n$ vertices}
\label{table:Embeddings}
\end{table}

In \cite{PaperMeikeStrong}, Weiß et al.\ have examined the $3$-connected cubic planar graphs that have at least one strong embedding on the projective plane, the torus and the Klein bottle. Looking into the data of 3-connected cubic planar graphs that have no strong embedding on one of these surfaces turns out to be very intriguing. 
For this, let $G_n$ be the set of 3-connected cubic planar graphs with $n$ vertices and $g_n:=\vert G_n\vert$. Moreover, let $p_n', t_n'$ and $k_n'$ be the numbers of graphs in $G_n$ that do not have a strong embedding on the projective plane, the torus, and the Klein bottle, respectively.

\begin{table}[H]
    \centering
    \begin{tabular}[h]{|c|c|c|c|c|c|c|c|c|c|c|c|}
    \hline
    \textbf{$n$} & \textbf{4} &  \textbf{6}& \textbf{8} &\textbf{ 0} &\textbf{12} & \textbf{14}&\textbf{16}&\textbf{18}&\textbf{20}&\textbf{22}\\
    \hline
     $ g_n$ & 1& 1& 2& 5& 14& 50& 233& 1249& 7595 & 49566\\
     \hline
     $p_n'$ & 0& 0& 1& 1& 2& 5& 11& 30& 110& 417\\
     \hline
     $t_n'$ & 1& 1& 1& 3& 7& 24& 93& 434& 2111 &11003\\
     \hline
     $k_n'$ & 1& 0& 0& 0& 0& 0& 0& 0& 1& 153\\
     \hline
\end{tabular}
\caption{Numbers of 3-connected cubic planar graphs with $n$ vertices that have no strong embedding on the projective plane, the torus or the Klein bottle}
\label{table:NoEmbedding}
\end{table}
We observe that there are exactly two graphs with fewer than $22$ vertices that do not have a strong embedding on the Klein bottle.
The two graphs are the $K_4$ and the dodecahedral graph.
The latter is due to the fact that the dodecahedral graph is the smallest cyclically 5-edge-connected graph, as mentioned in \cite{PaperMeikeStrong}.
The data for the torus in the table indicates that the $3$-connected cubic planar graphs with at most 22 vertices and no strong embedding on the torus belong to two distinct classes.
As shown in \cite{PaperMeikeStrong}, the first class contains the cyclically 5-edge connected graphs or graphs that can be obtained from them by recursively replacing a vertex by a triangle. This is why the dodecahedral graph ($20$ vertices) and the graph obtained by replacing a vertex in the dodecahedral graph with a triangle ($22$ vertices) have no strong embeddings on the torus.
Surprisingly, the remaining 3-connected cubic planar graphs with at most 22 vertices and no strong embedding on the torus are Apollonian duals. For better illustration, we present the numbers $t_n'$ together with the cardinalities of Apollonian duals with exactly $n$ vertices, denoted by $a_n$.

\begin{table}[H]
    \centering
    \begin{tabular}[h]{|c|c|c|c|c|c|c|c|c|c|c|c|}
    \hline
    \textbf{$n$} & \textbf{4} &  \textbf{6}& \textbf{8} &\textbf{10} &\textbf{12} & \textbf{14}&\textbf{16}&\textbf{18}&\textbf{20}&\textbf{22}\\
    \hline
     $a_n$ & 1& 1& 1& 3& 7& 24& 93& 434& 2110& 11002\\
  \hline
          $t_n'$ & 1& 1& 1& 3& 7& 24& 93& 434& 2111 &11003\\
     \hline
\end{tabular}
\caption{Numbers of Apollonian duals and number of $3$-connected cubic planar graphs with $n$ vertices that have no strong embedding on the torus}
\label{table:Apollonian}
\end{table}

However, considering strong embeddings on other orientable surfaces, we see that the dodecahedral graph has strong embeddings on orientable surfaces with Euler characteristic $-2$ and $-4$. Therefore, the graph resulting from replacing a vertex of the dodecahedral graph with a triangle also has strong embeddings on these surfaces. By first experimenting in GAP, we verified that no Apollonian dual with fewer than $22$ vertices has a strong embedding on an orientable surface of positive genus.
Indeed, we have been able to show that Apollonian duals cannot be strongly embedded on orientable surfaces of positive genera. This is detailed in \Cref{theorem:noOrient} and begs up the question whether Apollonian duals are precisely the 3-connected cubic planar graphs that have no strong embedding on any orientable surface of positive genus. The answer to this question is the focus of \Cref{sec:orient}.

\subsection{Exponential many Non-Isomorphic Strong Embeddings}
We know that a 3-connected cubic planar graph $G$ can have at most $p:=\frac{|V(G)|-2}{2}$ non-isomorphic strong embeddings on the projective plane, see \cite[Corollary 31]{PaperMeikeStrong}. This means that the number of non-isomorphic strong embeddings on the projective plane grows linearly in the number of vertices.
Moreover, in \cite{EnamiEmbeddings}, Enami presents a family of $3$-connected cubic planar graphs where the number of inequivalent embeddings on the torus and the Klein bottle grows exponentially in the number of vertices. However, these embeddings are not strong. Thus, we strengthen Enami's result by constructing a family of $3$-connected cubic planar graphs whose number of non-isomorphic strong embeddings on the torus and the Klein bottle, grows exponentially in the number of vertices.

\begin{theorem}\label{theorem:exponential}
    Let $n>3$ be a natural number and $i:= n \pmod 2.$ Then there exists a $3$-connected cubic planar graph $G$ that has at least $\binom{2n}{n-i}$ non-isomorphic strong embeddings on the torus and  at least $\binom{2n}{n-1+i}$ non-isomorphic strong embeddings on the Klein bottle.
\end{theorem}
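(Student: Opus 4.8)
The plan is to construct, for each $n>3$, a $3$-connected cubic planar graph $G$ whose dual $G^\ast$ contains a vertex $v$ of very high even or odd degree, so that many choices of an even cycle through the neighbourhood of $v$ (and possibly a second high-degree neighbour $w$, as in \Cref{prop:odd}) give rise to distinct twisted subgraphs. Concretely, I would take $G^\ast$ to be a planar triangulation built around one central vertex $v$ of degree roughly $2n$ (respectively two adjacent vertices $v,w$ of odd degree summing appropriately), arranged so that the link of $v$ is an induced cycle with no chords in $G^\ast$ (ensuring cyclic $4$-edge connectivity of $G$, or $5$-edge connectivity in the odd case). Then $G$ is the (cubic, planar, $3$-connected) dual of this triangulation, and its relevant faces are exactly the faces dual to $v$ (and $w$).

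The combinatorial heart is the following: by \Cref{theorem:nonIsomorphic}, strong embeddings of $G$ on the torus correspond to $\Aut(G^\ast)$-orbits of subgraphs of $G^\ast$ isomorphic to $K_{2,2m}$ (with the size-two part non-adjacent) or $K_{2,2,2}$, and on the Klein bottle to orbits of $A_3,A_5,A_6$ or $K_{2,2m-1}$. I would instead use the family of \emph{even cycles} through the neighbourhood of $v$: as in the proof of \Cref{prop:even}, a cycle $H=(v_{i_1},\dots,v_{i_{2k}})$ consisting of edges of the link of $v$ is an even subgraph with no chords (by the chordless-link hypothesis), hence by \Cref{lemma:twistedCycle} induces a strong embedding; and one checks these cycles realise $K_{2,2m}$- or $K_{2,2m-1}$-type subgraphs after a suitable re-reading, or more directly that the relevant twisted subgraphs in \Cref{theorem:reembedding} are exactly such cycles. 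Wait — more cleanly, I would engineer the link of $v$ together with the edges to $v$ so that selecting any $(n-i)$-subset (resp.\ $(n-1+i)$-subset) of a set of $2n$ "gadget" neighbours yields a valid twisted subgraph of the required isomorphism type; the binomial coefficients $\binom{2n}{n-i}$ and $\binom{2n}{n-1+i}$ then count these selections.

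The key steps, in order: (1) Describe the gadget graph $G^\ast$ explicitly — a triangulation with a distinguished high-degree vertex (or adjacent pair) whose link is chordless — and verify it is planar, $3$-connected and that its dual $G$ is cubic, planar, $3$-connected; (2) Parametrise a family of $\binom{2n}{n-i}$ subgraphs of $G^\ast$ of type $K_{2,2m}$ or $K_{2,2,2}$ (resp.\ $\binom{2n}{n-1+i}$ of type $A_3,A_5,A_6$ or $K_{2,2m-1}$), using the non-adjacency of the size-two parts guaranteed by the chordless link, so that by \Cref{theorem:reembedding}/\Cref{lemma:twistedCycle} each gives a strong embedding; (3) Show that $\Aut(G^\ast)$ fixes $v$ (and $w$) and acts on the gadget neighbours in a controlled way — ideally trivially, or as a small dihedral/cyclic group — so that the $\Aut(G^\ast)$-orbits of these subgraphs are singletons, or at worst of bounded size, which by \Cref{theorem:nonIsomorphic} still leaves exponentially many isomorphism classes (replacing $\binom{2n}{n-i}$ by $\binom{2n}{n-i}/|\Aut(G^\ast)|$, which one absorbs by enlarging $n$ or asymmetrising the gadget); (4) Conclude via \Cref{lemma:isomorphic} and \Cref{theorem:nonIsomorphic}.

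The main obstacle will be step (3): controlling $\Aut(G^\ast)$. A naive symmetric gadget (e.g.\ a wheel-like triangulation) has a large automorphism group — dihedral of order $\sim 4n$ — which would collapse the $\binom{2n}{n-i}$ subgraphs into far fewer orbits; a crude count then only gives $\binom{2n}{n-i}/(4n)$, still exponential but one must verify this carefully, or better, break the symmetry by attaching distinct small "markers" to the gadget neighbours so that $\Aut(G^\ast)$ becomes trivial while keeping the graph cubic-dual, planar and $3$-connected and keeping the link chordless. I expect the cleanest route is to asymmetrise, prove $\Aut(G^\ast)=1$, and then each of the $\binom{2n}{n-i}$ (resp.\ $\binom{2n}{n-1+i}$) subgraphs lies in its own orbit, giving the bound directly; verifying that the markers neither create chords in the link nor destroy $3$-connectivity is the fiddly part, but it is routine planar-graph bookkeeping rather than a conceptual difficulty.
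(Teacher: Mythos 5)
Your final plan is essentially the paper's proof: the authors take the $2n$-gonal bipyramid (a $2n$-cycle $v_1,\dots,v_{2n}$ plus two non-adjacent apexes $v_{2n+1},v_{2n+2}$ joined to every cycle vertex), subdivide three carefully chosen faces so that all vertex degrees become pairwise distinguishable and $\Aut(T_n)$ is trivial, observe that every $m$-subset of $\{v_1,\dots,v_{2n}\}$ together with the two apexes spans a $K_{2,m}$ whose size-two part is non-adjacent, and then apply \Cref{theorem:nonIsomorphic} so that the $\binom{2n}{n-i}$ (even $m$) and $\binom{2n}{n-1+i}$ (odd $m$) such subgraphs each form their own orbit. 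One caution about your first framing: a single high-degree hub with a chordless link only yields the one even cycle of \Cref{prop:even}, hence linearly many embeddings at best; the exponential count genuinely requires the two-apex $K_{2,m}$ structure you arrive at in your ``more cleanly'' correction, and with that (plus the trivial-automorphism asymmetrisation you already identify as the crux) your argument coincides with the paper's.
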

\begin{proof}
    In order to prove the above statement, we construct a family of planar triangulations $\mathcal{T}=\{T_n\mid n>3\}$. Our aim is to construct the family $\mathcal{T}$ such that every triangulation $T_n\in\mathcal{T}$ satisfies $\vert V(T_n)\vert=2n+5$ and $\vert \Aut(T_n)\vert =1.$ In particular, we construct the planar triangulation $T_n$ such that for $j\in\{0,1\}$ the graph $T_n$
    has exactly $\binom{2n}{n-j}$ subgraphs that are isomorphic to $K_{2,n-j}$, where the vertices of the partition sets of size two are not adjacent in $T_n$. 
    Since the dual graph of $T_n$ is a 3-connected cubic planar graph, \Cref{theorem:nonIsomorphic} implies for $i:= n \pmod 2$ that $({T_n})^\ast$ has at least $\binom{2n}{n-i}$ non-isomorphic strong embeddings on the torus and  at least $\binom{2n}{n-1+i}$ non-isomorphic strong embeddings on the Klein bottle. For simplicity, we first present an illustration of this planar triangulation $T_n$ in \Cref{fig:double2ngontet} and formalise its construction in the following.

    In order to define $T_n,$ we introduce a planar triangulation $T$ as the planar graph that contains a cycle $(v_1,\dots,v_{2n})$ of length $2n$, along with two additional vertices, namely $v_{2n+1}$ and $v_{2n+2}$, which are both adjacent to all vertices $v_1,\dots,v_{2n}$.
    We obtain the desired triangulation $T_n$ by manipulating the incidence structure of $T$, which is achieved by employing the subdivision of a triangle from \Cref{def:subdivision}:
\begin{enumerate}
    \item We subdivide the face $(v_1,v_2,v_{2n+1})$ of $T$ and get a planar triangulation $T'$ and a vertex $w_1.$
    \item The face $(v_1,v_2,w_1)$ of $T'$ is subdivided to get a planar triangulation $T''$ and a vertex $w_2.$
    \item Finally, we subdivide the face $(v_2,v_3,v_{2n+1})$ of $T''$ and obtain $T_n$ with a vertex $w_3.$
\end{enumerate}
Hence, $T_n$ is a planar triangulation with $2n+5$ vertices.
We observe that every subset of $\{v_1,\ldots,v_{2n}\}$ of cardinality $m\in\{n,n-1\}$ induces to a graph $K_{2,m},$ where the vertices of the partition set of size two, namely $v_{2n+1}$ and $v_{2n+2}$, are not adjacent in $T_n.$ Thus, it remains to show that the automorphism group of $T_n$ is trivial to conclude the result.
To see this, we observe that the vertices of $T_n$ satisfy
\begin{align*}
&\deg(v_1)=6,\,\deg(v_2)=7,\,\deg(v_3)=5,\,\deg(v_{2n+1})=2n+2,\\    
&\deg(v_{2n+2})=2n,\,\deg(w_1)=4,\,\deg(w_2)=\deg(w_3)=3
\end{align*}
and $\deg(v_{i})=4$ for all $i=4,\ldots,2n.$
Let $\phi \in \Aut(T_n)$ be an automorphism. We have to show $\phi(v)=v$ for all $v\in V(T_n).$ First, we notice that $\phi$ is a bijective map that maps a vertex to a vertex of the same degree. 
Observe that, since $n\geq 4$, the vertices $v_1, v_2, v_3, v_{2n+1}, v_{2n+2}$ are the unique vertices of their respective degrees. 
Hence, $\phi(v_1)=v_1,\,\phi(v_2)=v_2,\,\phi(v_3)=v_3,\,\phi(v_{2n+1})=v_{2n+1}$ and $\phi(v_{2n+2})=v_{2n+2}$ follows. Further, $\phi(w_3)$ has to be a vertex of degree $3$ that is incident to $\phi(v_3)=v_3.$
This implies $\phi(w_3)=w_3$ which in turn implies $\phi(w_2)=w_2$ and $\phi(w_1)=w_1.$
Lastly, $\phi(v_i)=v_i$ for all $i=4,\ldots,2n$ follows from the fact that $(\phi(v_1),\ldots,\phi(v_{2n}))=(v_1,v_2,v_3,\phi(v_4),\ldots,\phi(v_{2n}))$ has to be a cycle of length $2n,$ namely the unique cycle $(v_1,\ldots,v_{2n}).$ Hence, $\Aut(T_n)$ is trivial and the result follows.
\end{proof}
       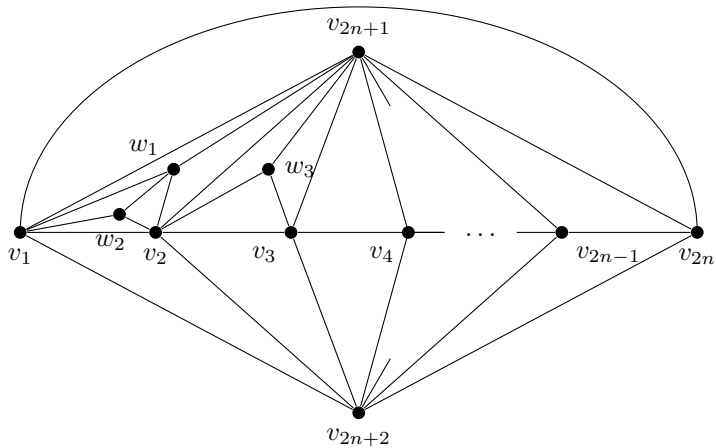
\begin{figure}[H]
        \centering
        \begin{tikzpicture}[scale=1.2]
            \tikzset{knoten/.style={circle,fill=black,inner sep=0.6mm}}
            \node [knoten,label=below :$v_{2n}$] (a) at (7.5,0) {};
            \node [knoten,label=below right :$v_{2n-1}$] (b) at (6,0) {};

            \node [knoten,label=below left:$v_3$] (d) at (3,0) {};
            \node [knoten,label=below :$v_2$] (e) at (1.5,0) {};
            \node [knoten,label=below :$v_1$] (f) at (0,0) {};
            \node [knoten,label=above:$v_{2n+1}$] (g) at (3.75,2) {};
            \node [knoten,label=below :$v_{2n+2}$] (h) at (3.75,-2) {};

            \node [knoten,label=above left:$w_1$] (v1) at (1.7,0.7) {};
            \node [knoten,label=right:$w_3$] (v2) at (2.75,0.7) {};
            \node [knoten] (v3) at (1.1,0.2) {};
            \node [label=$w_2$] (x) at (1.,-0.4) {};
                        \node [label=$\ldots$] (x) at (5.1,-0.25) {};
            \node [knoten, label= below left:$v_4$] (v111) at (4.3,0.) {};
            \draw[-] (v2) to (d);
            \draw[-] (v2) to (e);
            \draw[-] (v2) to (g);

            \draw[-] (v1) to (f);
            \draw[-] (v1) to (e);
            \draw[-] (v1) to (g);
             \draw[-] (v3) to (f);
            \draw[-] (v3) to (e);
            \draw[-] (v3) to (v1);

            \draw[-] (g) to (4.3,0);
            \draw[-] (h) to (4.3,0);
            \draw[-] (g) to (4.1,1.4);
            \draw[-] (h) to (4.1,-1.4);
                    \draw[-] (4.7,0) to (4.3,0);
            \draw[-] (a) to (b);
            \draw[-] (b) to (5.5,0);
            \draw[-] (d) to (4.7,0);

            \draw[-] (d) to (e);
            \draw[-] (e) to (f);
            
            \draw[-] (g) to (a);
            \draw[-] (g) to (b);
            \draw[-] (g) to (d);
            \draw[-] (g) to (e);
            \draw[-] (g) to (f);
            \draw[-] (h) to (a);
            \draw[-] (h) to (b);
            \draw[-] (h) to (d);
            \draw[-] (h) to (e);
            \draw[-] (h) to (f);
            \draw (0,0) arc
            [
                start angle=180,
                end angle=0,
                x radius=3.75cm,
                y radius =2.5cm
            ] ;
        \end{tikzpicture}
         \caption{The planar triangulation $T_n$}
        \label{fig:double2ngontet}
    \end{figure}

\section{Strong Embeddings of Apollonian Duals}\label{subsec:apollonian}
In this section, we study strong embeddings of Apollonian duals on orientable surfaces, the projective plane and the Klein bottle. Moreover, we present upper and lower bounds on the number of isomorphism classes of strong embeddings of Apollonian duals on the projective plane and the torus.
Before stating these results we give a note on the number of triangles that are contained in a given Apollonian network.

\begin{remark}\label{remark:triangles}
    For a given number of vertices, the Apollonian networks are the maximal planar graphs with the largest possible number of triangles contained in the graph. 
    This means that an Apollonian network with $n$ vertices has $3n-8$ triangles, see~\cite{NumberCycles}. Since an Apollonian network with $n$ vertices has $2n-4$ faces that are triangles, we can conclude that it has $n-4$ separating triangles.
\end{remark}

Next, we recall a known fact about subgraphs of Apollonian networks which are isomorphic to $K_4$. We exploit this instance to construct strong embeddings of Apollonian duals on the projective plane.

\begin{remark}
In \cite[Lemma 15]{EnamiEmbeddings}, Enami states that an Apollonian network $T$ has $\vert V(T)\vert -3$ subgraphs that are isomorphic to $K_4$. Thus, $T^\ast$ has exactly $\vert V(T)\vert -3$ inequivalent strong embeddings on the projective plane. Moreover, $T^\ast$ has at least one and at most $\vert V(T)\vert -3$ non-isomorphic strong embeddings on the projective plane.    
\end{remark}

With~\cite{Digraphs,GAP4,simplicialsurfacegap} we are able to compute all non-isomorphic strong embeddings of Apollonian duals with a fixed number of vertices on the projective plane. The numbers of these non-isomorphic strong embeddings can be seen in Table~\ref{tab:proj}. In the described table, the cardinality of non-isomorphic strong embeddings of Apollonian duals with $n$ vertices on the projective plane is denoted by $p_n^A$.
\begin{table}[H]
    \centering
    \begin{tabular}[h]{|c|c|c|c|c|c|c|c|c|c|c|c|}
\hline
\textbf{$n$} & \textbf{ 4} &  \textbf{6}& \textbf{8} &\textbf{ 10} &\textbf{ 12} & \textbf{14}&\textbf{16}&\textbf{18}&\textbf{20}&\textbf{22}\\
\hline
 $p_n^A$ & 1& 1& 2& 6& 25& 111& 590& 3251& 18620& 108619\\
 \hline
\end{tabular}
\caption{Number of non-isomorphic strong embeddings of Apollonian duals with $n$ vertices on the projective plane}
\label{tab:proj}
\end{table}

\subsection{Strong Orientable Embeddings of Apollonian Duals}\label{subsec:orientableEmbedding}
The aim of this subsection is to prove that Apollonian duals have no strong embedding on an orientable surface of positive genus.
We obtain this result by exploiting the fact that a twisted subgraph defining an embedding on an orientable surface has to be even.
In addition, we utilise the fact that a twisted subgraph $H$ of the dual graph of a 3-connected cubic planar graph G does not define a strong embedding if $\deg_H(v)=2$ and $\deg_{G^\ast}(v)=3$ for some vertex $v\in V(H)$, see \cite[Lemma 12]{PaperMeikeStrong}.

\begin{theorem}\label{theorem:noOrient}
An Apollonian dual has no strong embedding on an orientable surface of positive genus.
\end{theorem}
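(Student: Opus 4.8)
The plan is to proceed by induction on the number of vertices of the Apollonian network $T$, where $G=T^{\ast}$ is the Apollonian dual in question; note that $G$ is a $3$-connected cubic planar graph. We use the set-up of \Cref{sec:preliminaries}: up to equivalence, every strong embedding of $G$ on an orientable surface is the embedding $\beta$ obtained from the planar rotation system of $G$ together with the signature determined by some twisted subgraph $H$ of $G^{\ast}=T$, and by \cite[Lemma 8]{EnamiEmbeddings} the underlying surface is orientable precisely when $H$ is an even subgraph of $T$; in particular, if $\beta$ has positive genus then $H\neq\emptyset$, since $H=\emptyset$ returns the planar embedding.

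For the base case $T=K_{4}$ we have $G=K_{4}$. The nonempty even subgraphs of $K_{4}$ are exactly its four triangles and its three $4$-cycles, and in each of them every vertex has degree $2$ although its degree in $K_{4}$ is $3$; hence \cite[Lemma 12]{PaperMeikeStrong} shows that none of them induces a strong embedding, so $K_{4}$ has no strong embedding on an orientable surface of positive genus.

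Now let $T$ have $n>4$ vertices. Pick the vertex $v$ added in the final step of a construction of $T$, say with neighbours $a,b,c$; then $T_{v}$ is again an Apollonian network, the triangle $(a,b,c)$ is a face of $T_{v}$, and $T=(T_{v})^{(a,b,c)}$. Dually, this face corresponds to a vertex $f$ of $G':=(T_{v})^{\ast}$ with $\deg_{G'}(f)=3$, and $G=T^{\ast}$ arises from $G'$ by replacing $f$ with a triangle whose three new edges are the duals of $va,vb,vc$; since the planar embedding of a $3$-connected planar graph is unique, the planar embedding of $G$ is the vertex-truncation at $f$ of the planar embedding of $G'$. Suppose $G$ had a strong embedding $\beta$ on an orientable surface of positive genus, induced by an even twisted subgraph $H$ of $T$. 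As $\deg_{T}(v)=3$ and $\deg_{H}(v)$ is even, \cite[Lemma 12]{PaperMeikeStrong} forces $\deg_{H}(v)=0$, so $H$ contains none of $va,vb,vc$ and is therefore also an even twisted subgraph of $T_{v}$; let $\beta'$ be the embedding of $G'$ it induces. The key point is that $\beta$ is exactly the vertex-truncation at $f$ of $\beta'$: truncating a cubic vertex along three untwisted edges adds two vertices, three edges and one triangular face, hence preserves orientability and the Euler characteristic, and therefore the genus; and it preserves strongness, because the new face is a $3$-cycle while every other facial walk of $\beta$ is a cycle if and only if the corresponding facial walk of $\beta'$ is (a facial cycle through $f$ passes through exactly one of its three corners, which is precisely what rerouting along a triangle edge records). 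Consequently $\beta'$ is a strong embedding of the Apollonian dual $(T_{v})^{\ast}$ on an orientable surface of positive genus, contradicting the induction hypothesis.

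The technical core of the argument is the truncation step: one has to check carefully that $\beta$ really is the vertex-truncation of $\beta'$ and that this operation behaves as claimed with respect to orientability, genus, and---most delicately---strongness. Everything else (the base case, the use of \cite[Lemma 12]{PaperMeikeStrong} to remove the degree-$3$ vertex, and the fact that deleting a simplicial degree-$3$ vertex of an Apollonian network yields an Apollonian network) should be straightforward.
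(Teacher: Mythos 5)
Your proof is correct and follows essentially the same route as the paper's: take a degree-$3$ vertex $v$ of the Apollonian network, use \cite[Lemma 12]{PaperMeikeStrong} to force $\deg_H(v)=0$ for any even twisted subgraph $H$ inducing a strong embedding, and then descend to $T_v$ (the paper phrases the descent as a minimal counterexample rather than induction). The only substantive difference is that you spell out, via the vertex-truncation argument, why the restricted embedding of $(T_v)^{\ast}$ is again strong and of positive genus --- a point the paper passes over with a bare appeal to \cite[Lemma 8]{EnamiEmbeddings} --- so your write-up is, if anything, slightly more complete.
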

\begin{proof}
    We prove the statement by contradiction and assume that there exists an Apollonian dual $G$ that can be strongly embedded on an orientable surface of positive genus. We further assume that $G$ is minimal with respect to the number of vertices.
    Since $G$ can be strongly embedded on an orientable surface of positive genus, the Apollonian network $G^{\ast}$ contains an even subgraph $H$ inducing the described strong embedding. 
    The smallest Apollonian network, namely $K_4$, has exactly two strong embeddings, namely its unique planar embedding and an embedding on the projective plane. Consequently, $K_4$ has no strong embedding on an orientable surface of positive genus, which means $G^\ast\ncong K_4$.
    We can therefore assume that $\vert V(G^{\ast})\vert \geq 5$ and that $G^{\ast}$ is constructed by subdividing at least one triangle of $K_4$. So there exists a vertex $v\in V(G^{\ast})$ with $\deg_{G^{\ast}}(v)=3$ such that the deletion of $v$ results in ${(G^{\ast})}_v$ being a well-defined Apollonian network (see \Cref{def:subdivision}). Now, we give a case distinction whether the vertex $v$ is contained in the even subgraph $H$ or not. First, we assume that $v$ is not in $H$. This implies that the even subgraph $H$ of $G^{\ast}$ is fully contained in ${(G^{\ast})}_v$. By \cite[Lemma 8]{EnamiEmbeddings}, this means that the dual graph of ${(G^{\ast})}_v$ can be strongly embedded on an orientable surface of positive genus which contradicts the minimality of $G$.
    Hence, the vertex $v$ has to be contained in $H$. Since $H$ is even, this implies $\deg_H(v)=2$. However, since $\deg_{G^\ast}(v)=3$, the embedding of $G$ defined by $H$ cannot be strong. We therefore obtain the desired contradiction, and the result follows.
\end{proof}

\subsection{Strong Embeddings of Apollonian Duals on the Klein bottle}\label{sec:kleinbottle}
Next, we establish the existence of strong embeddings of Apollonian duals on the Klein bottle and derive lower bounds on the number of non-isomorphic strong embeddings on the Klein bottle.
\Cref{theorem:reembedding} asserts that in order to strongly embed an Apollonian dual on the Klein bottle we have to find subgraphs in the corresponding Apollonian network that are isomorphic to the graphs $A_3,A_5,A_6$ (see \Cref{fig:subgraphsKleinBottle}) or the graph $K_{2,2m-1}$ for $m\geq 2$, where the two vertices in the partition set of size two are not adjacent.
The following lemma shows that an Apollonian network that is not isomorphic to $K_4$ always contains one of those subgraphs that can be exploited to construct a strong embedding of the given Apollonian dual on the Klein bottle.

\begin{lemma}\label{prop:numTrian}
Every separating triangle of an Apollonian network $T$ gives rise to a subgraph that is isomorphic to $K_{2,3}$, where the vertices in the partition set of size two are not adjacent in $T$.
\end{lemma}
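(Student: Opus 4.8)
The plan is to attach to a given separating triangle $C=(v_1,v_2,v_3)$ of an Apollonian network $T$ two vertices $a,b\notin\{v_1,v_2,v_3\}$, each adjacent in $T$ to all of $v_1,v_2,v_3$ but not adjacent to one another. Then the vertex set $\{v_1,v_2,v_3\}\cup\{a,b\}$ together with the six edges $v_1a,v_2a,v_3a,v_1b,v_2b,v_3b$ is a subgraph of $T$ isomorphic to $K_{2,3}$ whose partition set of size two, namely $\{a,b\}$, consists of two vertices that are non-adjacent in $T$, which is exactly what the lemma asserts. So the whole task reduces to producing $a$ and $b$.

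First I would record the structural facts about the construction of $T$ that make $a$ available. Since $K_4$ has no separating triangle we may assume $\lvert V(T)\rvert\geq 5$. Whenever a triangular face $(p,q,r)$ is subdivided by a new vertex $w$, the only new $3$-cycles created are the three faces $(p,q,w),(p,r,w),(q,r,w)$ (any new $3$-cycle must use a new edge, all of which are incident to $w$, and $w$ has no other neighbours). Consequently every $3$-cycle of $T$ is a face at the moment it is completed and remains a face until, if ever, it is itself subdivided; after that it bounds no face. Hence a non-facial triangle of $T$, which by \Cref{remark:triangles} is the same as a separating triangle, is precisely a triangle that was, at some stage $T^{(k)}$ of the construction, a face that then got subdivided. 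Fixing that stage, the vertex $a$ added in the subdivision of $C$ lies in the previously empty region bounded by $C$ and satisfies $a\sim v_1,v_2,v_3$; these adjacencies persist in $T$.

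Next I would prove the elementary sublemma that gives $b$: if an Apollonian network $N$ with $\lvert V(N)\rvert\geq 4$ has $(v_1,v_2,v_3)$ as a facial triangle, then some vertex of $N$ outside $\{v_1,v_2,v_3\}$ is adjacent to all of $v_1,v_2,v_3$. This goes by induction on $\lvert V(N)\rvert$: for $N=K_4$ take the fourth vertex; otherwise $N$ arises from some $N'$ by subdividing a face $(p,q,r)$ with new vertex $w$, and either $(v_1,v_2,v_3)$ is one of the three new faces, say $(p,q,w)$, in which case $r$ works since $r$ is adjacent to $p$, to $q$, and (by construction of $w$) to $w$, or else $(v_1,v_2,v_3)$ is also a face of $N'$ and the claim follows by induction. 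Applying the sublemma to $N=T^{(k)}$ (which has at least four vertices and has $C$ as a face) yields a vertex $b\in V(T^{(k)})\setminus\{v_1,v_2,v_3\}$ with $b\sim v_1,v_2,v_3$.

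Finally I would check that $a$ and $b$ are distinct and non-adjacent. Since $a$ is introduced strictly after stage $T^{(k)}$ while $b\in V(T^{(k)})$, we have $a\neq b$. In the planar embedding, $a$ lies in the region of $C$ that was subdivided, whereas $b$ lies on the other side of the Jordan curve $C$; every subdivision step performed after stage $T^{(k)}$ adds a vertex inside a single face and so joins it only to vertices on its own side of $C$, hence no edge of $T$ crosses $C$ and in particular $a\not\sim_T b$. Collecting the six edges $v_ia,v_ib$ then gives the required copy of $K_{2,3}$. The only genuinely delicate point is this last planarity bookkeeping — that subdivisions on one side of $C$ never create an edge to the other side — and that is where I would be most careful, though it is immediate once one observes that each subdivision adds a vertex inside exactly one face.
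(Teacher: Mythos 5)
Your proof is correct and follows essentially the same route as the paper's: identify a separating triangle as a once-facial triangle that was subdivided, take the subdividing vertex on one side and a pre-existing common neighbour of the triangle on the other, and invoke planarity for the non-adjacency of the two degree-three partition vertices. Your inductive sublemma (every facial triangle of an Apollonian network has a common neighbour outside it) and the Jordan-curve argument simply make explicit two steps the paper asserts more tersely.
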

\begin{proof}
    Let $T$ be an Apollonian network not isomorphic to $K_4$. Hence, $T$ is constructed from $K_4$ by recursively subdividing faces. If the vertices of a face $(v_1,v_2,v_3)$ that is subdivided in the process of constructing $T$ are adjacent to another common vertex $v$, then these four vertices together with the new vertex $w$ give rise to a complete bipartite graph $K_{2,3}.$ Here, $\{v,w\}$ and $\{v_1,v_2,v_3\}$ form the partition sets, and $v$ and $w$ are not adjacent, since $T$ is planar.
    After the subdivision, the cycle $(v_1,v_2,v_3)$ forms a separating triangle of the constructed Apollonian network $T$. Since each set of three vertices of a subgraph of $T$ isomorphic to $K_4$ are adjacent to another common vertex, there is a bijection between the separating triangles of an Apollonian network $T$ and the subgraphs of $T$ being isomorphic to $K_{2,3}$, where the vertices in the partition set of size two are not adjacent in $T$. 
\end{proof}

Since every Apollonian network other than $K_4$ contains a separating triangle (see \Cref{remark:triangles}), the above result implies the existence of strong embeddings of Apollonian duals on the Klein bottle by \Cref{theorem:reembedding}.

\begin{corollary}\label{theorem:kleinbottle}
    Each Apollonian dual not isomorphic to $K_4$ has a strong embedding on the Klein bottle.
\end{corollary}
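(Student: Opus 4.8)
The plan is to deduce \Cref{theorem:kleinbottle} directly from the two ingredients that have just been assembled: \Cref{prop:numTrian}, which converts a separating triangle into a $K_{2,3}$-subgraph of the Apollonian network with the two degree-three vertices of the partition non-adjacent, and part~3) of \Cref{theorem:reembedding}, which says that such a $K_{2,2m-1}$-subgraph (here $m=2$, so $K_{2,3}$) of $G^{\ast}$ corresponds to a strong embedding of $G$ on the Klein bottle.

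\begin{proof}
    Let $G$ be an Apollonian dual not isomorphic to $K_4$; then its dual $G^{\ast}$ is an Apollonian network not isomorphic to $K_4$. By \Cref{remark:triangles}, $G^{\ast}$ has $\vert V(G^{\ast})\vert-4\geq 1$ separating triangles, so $G^{\ast}$ contains at least one separating triangle $T$. Applying \Cref{prop:numTrian} to $T$ yields a subgraph of $G^{\ast}$ isomorphic to $K_{2,3}$ in which the two vertices of the partition set of size two are not adjacent in $G^{\ast}$. This is exactly a subgraph isomorphic to $K_{2,2m-1}$ with $m=2$ satisfying the non-adjacency condition in part~3) of \Cref{theorem:reembedding}. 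Hence that theorem provides a strong embedding of $G$ on the Klein bottle.
\end{proof}

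There is essentially no obstacle here: the corollary is a one-line consequence of the preceding lemma and the characterisation theorem, and the only thing worth being careful about is matching the combinatorial data correctly, namely that $K_{2,3}$ is the $m=2$ instance of the family $K_{2,2m-1}$ appearing in \Cref{theorem:reembedding} and that the required non-adjacency of the size-two partition set is precisely what \Cref{prop:numTrian} delivers. One should also record explicitly why a separating triangle exists in the first place, which is \Cref{remark:triangles}: every Apollonian network on $n$ vertices has $n-4$ separating triangles, so any Apollonian network other than $K_4$ has at least one. If one wanted a slightly stronger statement, the same argument shows $G$ has at least $\vert V(G^{\ast})\vert-4$ inequivalent strong embeddings on the Klein bottle arising from $K_{2,3}$-subgraphs, but for the corollary as stated the mere existence suffices.
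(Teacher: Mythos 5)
Your proof is correct and follows exactly the paper's route: \Cref{remark:triangles} supplies a separating triangle, \Cref{prop:numTrian} converts it into a $K_{2,3}$-subgraph with the size-two partition set non-adjacent, and part~3) of \Cref{theorem:reembedding} (the case $m=2$ of $K_{2,2m-1}$) then yields the strong embedding on the Klein bottle. Nothing is missing.
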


In the following, we show that if an Apollonian network contains a subgraph that is isomorphic to $K_{2,2m-1}$, then the equality $m=2$ must be satisfied.

\begin{proposition}\label{nok22m-1}
    An Apollonian network $T$ contains no subgraph isomorphic to $K_{2,2m-1}$ for $m\geq 3$, where the vertices in the partition set of size two are not adjacent in $T$.
\end{proposition}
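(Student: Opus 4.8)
The plan is to run an extremal argument: suppose the statement fails, take a counterexample with the fewest vertices, and reduce it by deleting a degree-$3$ vertex. The first move is to reduce to the smallest case. It suffices to show that an Apollonian network $T$ contains no subgraph isomorphic to $K_{2,5}$ whose two vertices in the partition set of size two are non-adjacent in $T$: indeed, a copy of $K_{2,2m-1}$ with $m\geq 3$ and size-two part $\{u,v\}$ non-adjacent contains, after discarding all but five vertices of the part of size $2m-1\geq 5$, such a copy of $K_{2,5}$, and non-adjacency of $u$ and $v$ is inherited. So assume for contradiction that $T$ is an Apollonian network of minimum order containing a subgraph $H\cong K_{2,5}$ with parts $\{u,v\}$, where $\{u,v\}\notin E(T)$, and $\{w_1,\dots,w_5\}$. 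Since $|V(H)|=7$ we have $|V(T)|\geq 7$, so $T\ncong K_4$; as in the proof of \Cref{theorem:noOrient}, $T$ then has a vertex $z$ with $\deg_T(z)=3$ whose deletion $T_z$ is again an Apollonian network, necessarily on fewer vertices.

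The key step is to show $z\notin V(H)$. The vertex $z$ cannot be $u$ or $v$, because each of $u,v$ is adjacent to all of $w_1,\dots,w_5$ and hence has degree at least $5>3$. Nor can $z$ equal some $w_i$: in a simple planar triangulation on at least four vertices, the three neighbours of any degree-$3$ vertex are pairwise adjacent (the three triangular faces incident to such a vertex force this), so $\deg_T(w_i)=3$ together with $w_i\sim u$ and $w_i\sim v$ would give $\{u,v\}\in E(T)$, contradicting our hypothesis. Hence $z\notin V(H)$, so deleting $z$ removes no vertex or edge of $H$; thus $H$ is a subgraph of the Apollonian network $T_z$ and $u,v$ remain non-adjacent in $T_z$. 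But $|V(T_z)|<|V(T)|$, contradicting the minimality of $T$, and the proposition follows.

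I expect the only genuinely load-bearing point to be the observation that the three neighbours of a degree-$3$ vertex of a triangulation form a triangle — this is the one place where planarity and the triangulation property are actually used, and it is exactly what prevents any $w_i$ from being the removable vertex. The remaining ingredients (that $K_{2,2m-1}$ contains $K_{2,5}$, and that deleting a degree-$3$ vertex of an Apollonian network with at least five vertices again yields an Apollonian network) are routine, the latter already having been invoked in the proof of \Cref{theorem:noOrient}.
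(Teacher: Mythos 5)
Your proof is correct and follows essentially the same route as the paper's: a minimal counterexample, deletion of a degree-$3$ vertex whose removal leaves an Apollonian network, and the observation that the three neighbours of a degree-$3$ vertex in a triangulation form a triangle (which is exactly how the paper rules out the deleted vertex lying in the part of size $2m-1$, while the degree bound rules out the part of size two). The preliminary reduction to $K_{2,5}$ is a harmless extra step the paper does not take.
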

\begin{proof}
    We assume that $T$ is an Apollonian network with a subgraph $H$ is isomorphic to $K_{2,2m-1}$ for $m\geq 3$ and that $T$ is minimal with respect to the number of vertices. Since the complete graph $K_4$ does not contain a subgraph isomorphic to $K_{2,2m-1}$, we know that $\vert V(T)\vert\geq 5$. Hence, $T$ is constructed by applying at least one subdivision to $K_4$. Thus, there is a vertex $v\in V(T)$ with $\deg_T(v)=3$, so the graph $T_v$ obtained by deleting $v$ is a well-defined Apollonian network (see \Cref{def:subdivision}). We make a distinction based on whether $v$ is contained in $H$ or not.
    \begin{itemize}
    \item[(a)] If $v$ is not contained in the subgraph $H$, then $H$ is fully contained in $T_v$. This means that $T_v$ contains a subgraph isomorphic to $K_{2,2m-1},$ where the vertices in the partition set of size two are not adjacent which contradicts the minimality of $T$.
    \item[(b)] Furthermore, if $v$ is contained in $H$, then the following contradiction arises: On the one hand, $v$ cannot be a vertex of the partition set of size $2m-1$, since the three adjacent vertices of $v$ are pairwise adjacent, thus any two of these vertices cannot form the described partition set of size 2. On the other hand, $v$ cannot be a vertex of the partition set of size 2, because $v$ has exactly three neighbours and therefore cannot be adjacent to $2m-1$ vertices of $T$ for $m\geq 3$.
\end{itemize}
\end{proof}

In the following, we use the above statements to give a remark on the number of possible strong embeddings of a given Apollonian dual on the Klein bottle.
\begin{remark}
With \Cref{prop:numTrian} and the fact that an Apollonian network $G^{\ast}$ has exactly $|V(G^{\ast})|-4$ separating triangles, we obtain at least $|V(G^{\ast})|-4$ inequivalent strong embeddings on the Klein bottle.  
By denoting the set of all separating triangles of $G^{\ast}$ by $\mathcal{W}$ we conclude that the number of non-isomorphic strong embeddings of $G$ on the Klein bottle is at least $\vert \mathcal{W}^{\Aut(G^{\ast})}\vert$ where $\mathcal{W}^{\Aut(G^{\ast})}$ denotes the set of orbits of $Aut(G^{\ast})$ on the set $\mathcal{W}$.
\end{remark}

Again, using GAP and the packages Digraphs and Simplicial Surfaces~\cite{Digraphs,GAP4,simplicialsurfacegap} we compute all isomorphism classes of strong embeddings of Apollonian duals with a fixed number of vertices on the Klein bottle. The numbers of these non-isomorphic strong embeddings are recorded in Table~\ref{tab:klein}. In this table, the number of isomorphism classes of strong embeddings of Apollonian duals with $n$ vertices on the Klein bottle is denoted by $k^A_n$.
\begin{table}[H]
    \centering
    \begin{tabular}[h]{|c|c|c|c|c|c|c|c|c|c|c|c|}
\hline
\textbf{$n$} & \textbf{ 4} &  \textbf{6}& \textbf{8} &\textbf{ 10} &\textbf{ 12} & \textbf{14}&\textbf{16}&\textbf{18}&\textbf{20}&\textbf{22}\\
\hline
 $k^A_n$ & 0& 1& 2& 10& 47& 283& 1750& 11388& 74334& 488785\\
 \hline
\end{tabular}
\caption{Number of non-isomorphic strong embeddings of Apollonian duals with $n$ vertices on the Klein bottle}
\label{tab:klein}
\end{table}

\section{\texorpdfstring{Strong Orientable Embeddings of $3$-Connected Cubic Planar Graphs}{Strong Orientable Embeddings of 3-Connected Cubic Planar Graphs}}
\label{sec:orient}
In this section we prove that the Apollonian duals are exactly the $3$-connected cubic planar graphs that have no strong embedding on any orientable surface of positive genus, see \Cref{theorem:mainNoOrient}. 
The proof relies on the fact that any 3-connected cubic planar graph can be uniquely decomposed into graphs that are either isomorphic to $K_4$ or cyclically 4-edge connected, see ~\cite[Theorem 3.5]{fouquet}. In order to introduce this decomposition of a $3$-connected cubic graph, we have to define the splitting of a cyclic edge cut. 

\begin{definition}\label{def:splitting}
Let $G$ be a $3$-connected cubic graph which is not cyclically $4$-edge connected and $S=\{\{v_1,w_1\},\{v_2,w_2\},\{v_3,w_3\}\}\subseteq E(G)$ a cyclic edge cut of size three. Deleting the edges of $S$ results in the disconnected graph $G'$ with connected components $G'_1$ and $G'_2$. Without loss of generality we assume that $v_1,v_2,v_3$ are contained in $G'_1$ and $w_1,w_2,w_3$ are contained in $G'_2,$ hence these vertices all have degree two in $G'$. So by adding two new vertices $v$ and $w$ with $v,w\notin V(G)$ and the edges $\{v,v_i\}$ and $\{w,w_i\}$ for $i=1,2,3$ we have constructed a cubic graph with two connected components.
This operation is called the \textbf{\emph{splitting}} of a cyclic edge cut of size three.
\end{definition}

To clarify this operation, consider the 3-connected cubic planar graph shown in \Cref{fig:octtet}. There, the set $\{\{v_i,w_i\}\mid i=1,2,3\}$ is a cyclic edge cut of size three. Applying the splitting operation to this cut yields the disconnected graph depicted in \Cref{fig:decomp}.

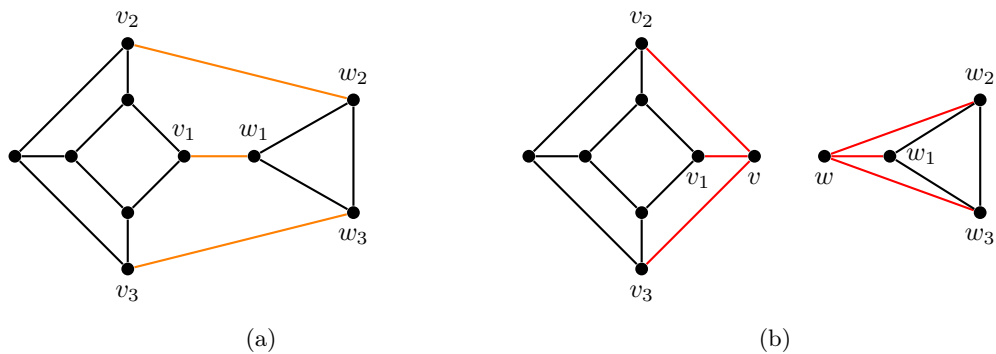
\begin{figure}[H]
    \centering
    \begin{subfigure}{.45\textwidth}
        \begin{tikzpicture}[scale=1.5]
    \tikzset{knoten/.style={circle,fill=black,inner sep=0.6mm}}
    \node [knoten] (a) at (0,0.5) {};
    \node [knoten, label=above:$v_1$] (b) at (0.5,0) {};
    \node [knoten] (c) at (0,-0.5) {};
    \node [knoten] (d) at (-0.5,0.) {};
    
    \node [knoten, label=$v_2$] (e) at (0,1) {};
    \node [knoten, label=$w_1$] (f) at (1.12+0.,0) {};
    \node [knoten,label=below:$v_3$] (g) at (0,-1) {};
    \node [knoten] (h) at (-1,0) {};
    
    \node [knoten, label=$w_2$] (d1) at (2+0.,0.5) {};
        \node [knoten, label=below:$w_3$] (d2) at (2+0.,-0.5) {};
    \draw[-, thick] (a) to (b);
    \draw[-, thick] (b) to (c);
    \draw[-, thick] (c) to (d);
    \draw[-, thick] (a) to (d);
    \draw[-, thick] (a) to (e);
    \draw[-, thick, orange] (b) to (f);
    \draw[-, thick] (c) to (g);
    \draw[-, thick] (d) to (h);

    \draw[-, thick] (d1) to (f);
    \draw[-, thick] (d2) to (f);
    \draw[-, thick] (d2) to (d1);
    \draw[-, thick,orange] (d1) to (e);
    \draw[-, thick,orange] (d2) to (g);
    \draw[-, thick] (h) to (g);
    \draw[-, thick] (e) to (h);

\end{tikzpicture}
        \caption{}
        \label{fig:octtet}
    \end{subfigure}
\begin{subfigure}{.45\textwidth}
        \begin{tikzpicture}[scale=1.5]
    \tikzset{knoten/.style={circle,fill=black,inner sep=0.6mm}}
    \node [knoten] (a) at (0,0.5) {};
    \node [knoten, label=below:$v_1$] (b) at (0.5,0) {};
    \node [knoten] (c) at (0,-0.5) {};
    \node [knoten] (d) at (-0.5,0.) {};
    
    \node [knoten, label=$v_2$] (e) at (0,1) {};
    \node [knoten, label=below:$w$] (f) at (1.12+0.5,0) {};
    \node [knoten,label=below:$v_3$] (g) at (0,-1) {};
    \node [knoten] (h) at (-1,0) {};
    
    \node [knoten, label=$w_2$] (d1) at (3,0.5) {};
    \node [knoten, label=below:$w_3$] (d2) at (3,-0.5) {};

    \node [knoten, label=right:$w_1$] (w) at (2.2,0) {};

    \node [knoten, label=below:$v$] (v) at (1,0) {};
    \draw[-, thick] (a) to (b);
    \draw[-, thick] (b) to (c);
    \draw[-, thick] (c) to (d);
    \draw[-, thick] (a) to (d);
    \draw[-, thick] (a) to (e);
    \draw[-, thick] (c) to (g);
    \draw[-, thick] (d) to (h);

    \draw[-, thick,red] (d1) to (f);
    \draw[-, thick,red] (d2) to (f);
    \draw[-, thick] (d2) to (d1);
    
    \draw[-, thick] (d1) to (w);
    \draw[-, thick] (d2) to (w);
    \draw[-, thick,red ] (f) to (w);

    \draw[-, thick,red] (g) to (v);
    \draw[-, thick,red] (e) to (v);
    \draw[-, thick,red] (b) to (v);
    
    \draw[-, thick] (h) to (g);
    \draw[-, thick] (e) to (h);

\end{tikzpicture}
        \caption{}
        \label{fig:decomp}
    \end{subfigure}
    \caption{A cubic planar graph (a) and the graph that is constructed by splitting the cyclic edge cut of size three in orange, where the new edges are coloured in red (b)}
    \label{fig:split}
\end{figure}

Note that the disconnected graph in \Cref{fig:decomp} is completely decomposed into components that are either isomorphic to $K_4$ or cyclically 4-edge connected. Such a unique decomposition is defined in the following.

\begin{definition}
Let $G$ be a $3$-connected cubic planar graph. We call the tuple $(G_1,\ldots,G_k)$ a \emph{\textbf{$\C4C$-decomposition}} of the graph $G$, if
    \begin{itemize}
    \item $G_1,\ldots,G_k$ are the connected components that are obtained by recursively splitting all cyclic edge cuts of size three starting with $G$ and
    \item for $1\leq i\leq k$ the graph $G_i$ is either isomorphic to $K_4$ or cyclically $4$-edge connected.
    \end{itemize}
We call the connected components $G_1,\ldots,G_k$ the \emph{\textbf{indecomposables}} of $G$. If $G$ is either cyclically 4-edge connected or isomorphic to $K_4$ then $G$ itself is the only indecomposable of $G$.
\end{definition}

Note that if an indecomposable of a 3-connected cubic planar graph $G$ has a strong embedding on a surface $S$, then $G$ also has a strong embedding on $S$. This follows from the fact that the dual graph of an indecomposable is isomorphic to a subgraph of $G^{\ast}$.
Using the result established by Fouquet et al.\ in~\cite[Theorem 3.8]{fouquet} allows us to give the following characterisation of an Apollonian network.
\begin{proposition}\label{prop:indecomposables}
Let $G$ be a 3-connected cubic planar graph. The indecomposables of the $\C4C$-decomposition are all isomorphic to the graph $K_4$ if and only if $G$ is an Apollonian dual.
\end{proposition}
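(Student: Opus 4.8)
The plan is to prove both directions by relating the recursive structure of an Apollonian dual (via the operation in \Cref{def:subdivision} on its dual) to the recursive structure of the $\C4C$-decomposition (via the splitting operation of \Cref{def:splitting}). The key bridge is the observation that splitting a cyclic $3$-edge cut of $G$ corresponds, on the dual side, to the inverse of subdividing a triangle, i.e.\ to a "deletion" in the sense of \Cref{def:subdivision} applied to $G^\ast$. Concretely, a cyclic edge cut $S=\{\{v_i,w_i\}\mid i=1,2,3\}$ of size three in a $3$-connected cubic planar graph $G$ dualises to a triangle $T^\ast=(t_1,t_2,t_3)$ in $G^\ast$ whose three edges are exactly the duals of the edges in $S$; splitting $S$ and adding the two apex vertices $v,w$ corresponds dually to cutting $G^\ast$ along $T^\ast$ into two planar triangulations, each of which has gained a new triangular face bounded by a copy of $T^\ast$. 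I would first make this correspondence precise, using Whitney's unique embedding theorem to move between $G$ and $G^\ast$ unambiguously, and record that $G_i \cong K_4$ for an indecomposable $G_i$ if and only if the corresponding piece of $G^\ast$ is a single triangle together with one apex, i.e.\ a copy of $K_4$.

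For the direction "$\Leftarrow$": suppose $G$ is an Apollonian dual, so $G^\ast$ is an Apollonian network, built from $K_4$ by a sequence of triangle subdivisions. I would argue by induction on $|V(G^\ast)|$. If $G^\ast \cong K_4$ then $G\cong K_4$ is its own indecomposable and we are done. Otherwise $G^\ast$ has a separating triangle (by \Cref{remark:triangles}), equivalently the vertex $w$ of degree $3$ added last in some subdivision step, and the three edges at $w$ are the dual of a cyclic $3$-edge cut of $G$; splitting it produces one piece whose dual is $K_4$ (the new vertex $w$ with its triangle) and one piece whose dual is the smaller Apollonian network $(G^\ast)_w$. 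By induction the latter's $\C4C$-decomposition has all indecomposables $\cong K_4$, and together with the $K_4$ split off we conclude the same for $G$. Here I would lean on \Cref{prop:indecomposables}'s hypothesis being an iff by citing \cite[Theorem 3.8]{fouquet} for the fact that the $\C4C$-decomposition is well-defined and independent of the order of splittings, so the inductive bookkeeping is legitimate.

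For the direction "$\Rightarrow$": suppose every indecomposable of the $\C4C$-decomposition of $G$ is isomorphic to $K_4$. Running the splitting process in reverse, $G$ is obtained from a disjoint union of copies of $K_4$ by repeatedly "gluing" two cubic components along a triple of degree-two vertices (the inverse of splitting). Dually, each such gluing is a triangle-subdivision-type operation assembling $G^\ast$ from triangles; I would show by induction on the number of gluing steps that the resulting $G^\ast$ is always an Apollonian network, using that gluing a $K_4$-piece onto a face of an already-constructed Apollonian network along a triangle is precisely the subdivision of \Cref{def:subdivision}, and that starting from a single $K_4$ is the base case of the Apollonian construction. The main obstacle I anticipate is the careful verification that the dual of the splitting operation is exactly triangle subdivision and that this interacts correctly with planarity and $3$-connectedness — in particular checking that the cyclic $3$-edge cuts of $G$ are in bijection with the separating triangles of $G^\ast$, and that after splitting both components are themselves $3$-connected cubic planar so the induction applies; once that dictionary is nailed down, both directions are short inductions, and invoking \cite[Theorem 3.8]{fouquet} for uniqueness of the decomposition removes any ambiguity about the order in which cuts are processed.
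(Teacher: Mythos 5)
Your argument is correct in substance, but it is a genuinely different route from the paper's: the paper offers no proof of \Cref{prop:indecomposables} at all, treating it as an immediate consequence of the structure theorem of Fouquet et al.\ \cite[Theorem 3.8]{fouquet}, whereas you build a self-contained two-way induction on the planar-duality dictionary (cyclic $3$-edge cuts of $G$ $\leftrightarrow$ separating triangles of the triangulation $G^{\ast}$, splitting $\leftrightarrow$ cutting along a separating triangle, gluing a $K_4$-component $\leftrightarrow$ subdividing a triangular face). What your approach buys is an explicit explanation of \emph{why} the proposition holds and a reusable dictionary between the $\C4C$-decomposition of $G$ and the subdivision history of $G^{\ast}$; what it costs is that you must verify by hand the facts the citation supplies for free, namely that both components produced by a split are again $3$-connected cubic planar and that the decomposition is independent of the order of splittings (you do correctly lean on \cite[Theorem 3.8]{fouquet} for the latter, which is the legitimate way to discharge it).

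One local correction: in the "$\Leftarrow$" direction you write that "the three edges at $w$ are the dual of a cyclic $3$-edge cut of $G$". The edges incident to the apex $w$ in $G^{\ast}$ are dual to the three edges of the triangular face of $G$ corresponding to $w$, which do \emph{not} form an edge cut. The cut is dual to the three edges of the separating triangle $(v_1,v_2,v_3)$ that was subdivided to create $w$ (the link of $w$), exactly as you state correctly in your first paragraph. With that phrase repaired, the two inductions go through.
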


This proposition helps us to prove the following theorem.
\begin{theorem}\label{theorem:mainNoOrient}
    Let $G$ be a 3-connected cubic planar graph. Then $G$ is an Apollonian dual if and only if $G$ cannot be strongly embedded on an orientable surface of positive genus.
\end{theorem}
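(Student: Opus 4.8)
The statement is an equivalence, so the plan is to establish the two implications separately, each by reducing to a result already in hand.

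The forward implication is immediate: if $G$ is an Apollonian dual, then \Cref{theorem:noOrient} already asserts that $G$ has no strong embedding on any orientable surface of positive genus, and nothing further is needed.

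For the converse I would argue contrapositively. Suppose $G$ is not an Apollonian dual; the goal is to exhibit a strong embedding of $G$ on an orientable surface of positive genus. First, pass to the $\C4C$-decomposition $(G_1,\dots,G_k)$ of $G$. By \Cref{prop:indecomposables}, the indecomposables are not all isomorphic to $K_4$, and since by definition each indecomposable is either isomorphic to $K_4$ or cyclically $4$-edge connected, there is an index $j$ with $G_j$ cyclically $4$-edge connected and $G_j\not\cong K_4$. The graph $G_j$ is again cubic and planar, because splitting a cyclic edge cut of size three preserves both properties (each side inherits the planar sub-embedding, and the three degree-two vertices produced on each side are restored to degree three by the new vertex). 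Next, apply \Cref{corollary:orientable} to $G_j$, obtaining a strong embedding of $G_j$ on an orientable surface $S$ of positive genus. Finally, transport this embedding to $G$: the dual graph of an indecomposable is isomorphic to a subgraph of $G^{\ast}$ (as noted just before \Cref{prop:indecomposables}), so the twisted subgraph of $G_j^{\ast}$ realising the embedding of $G_j$ is simultaneously a twisted subgraph of $G^{\ast}$, and it induces a strong embedding of $G$ on the same surface $S$. This proves the contrapositive, hence the theorem.

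The substantial work lies entirely in the ingredients \Cref{theorem:noOrient}, \Cref{corollary:orientable} and \Cref{prop:indecomposables}, which are already available. Within the present argument the one point I would be careful about is that the chosen indecomposable $G_j$ must be genuinely distinct from $K_4$: otherwise \Cref{corollary:orientable} would be useless here, since $K_4$ admits strong embeddings only on the sphere and the projective plane. It is precisely \Cref{prop:indecomposables} that rules out the all-$K_4$ case and thereby makes the converse direction non-trivial. A second, more routine point to verify is that reinterpreting a twisted subgraph of $G_j^{\ast}$ inside the larger graph $G^{\ast}$ leaves the homeomorphism type of the resulting surface unchanged; this is exactly the content of the observation on indecomposables cited above.
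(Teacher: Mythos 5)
Your proof is correct and follows essentially the same route as the paper's: the forward direction is \Cref{theorem:noOrient}, and the converse goes through the $\C4C$-decomposition, \Cref{prop:indecomposables}, \Cref{corollary:orientable}, and the transport of an embedding of an indecomposable back to $G$ via the subgraph relation between the duals. The only difference is cosmetic: the paper phrases the converse as a minimal-counterexample argument and uses minimality to embed the non-$K_4$ indecomposable, whereas you apply \Cref{corollary:orientable} to that indecomposable directly (which is legitimate, since by the definition of the $\C4C$-decomposition a non-$K_4$ indecomposable is cyclically $4$-edge connected), which if anything streamlines the argument.
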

\begin{proof}
    In \Cref{theorem:noOrient} we have established that an Apollonian dual has no strong embedding on any orientable surface of positive genus. Thus, it remains to show that there exist no other 3-connected cubic planar graph satisfying this property.
    So, let $G$ be a minimal counterexample to this statement. This means that $G$ is a 3-connected cubic planar graph not isomorphic to an Apollonian dual without any strong embedding on an orientable surface of positive genus. In particular, we assume $G$ to be minimal with respect to the number of vertices.
    Suppose that $G$ is not cyclically 4-edge connected. Hence, by \cite[Theorem 3.8]{fouquet}, $G$ has a unique $\C4C$-decomposition $(G_1,\ldots,G_k)$ with $k>1$.
    Since $G$ is not an Apollonian dual, there exists an $1\leq i\leq k$ such that $G_i$ is not isomorphic to $K_4$ by \Cref{prop:indecomposables}. 
    We observe that $G_i$ has fewer vertices than $G$ and is $3$-connected, cubic, planar and not isomorphic to an Apollonian dual. By our assumption that $G$ is a minimal counterexample, $G_i$ has a strong embedding on an orientable surface of positive genus. As discussed, a strong embedding of $G_i$ on an orientable surface of positive genus can be translated into a strong embedding of $G$ on the same surface. This is a contradiction, which proves that $G$ is cyclically 4-edge connected. Hence, $G$ has a strong embedding on an orientable surface of positive genus by \Cref{corollary:orientable}. Thus the result follows by contradiction.
\end{proof}

\section*{Acknowledgements}
We gratefully acknowledge the funding by the Deutsche Forschungsgemeinschaft (DFG, German Research Foundation) in the framework of the Collaborative Research Centre CRC/TRR 280 “Design Strategies for Material-Minimized Carbon Reinforced Concrete Structures – Principles of a New Approach to Construction” (project ID 417002380). Furthermore, R.\ Akpanya was supported by a grant from the Simons Foundation (SFI-MPS-Infrastructure-00008650).

\bibliographystyle{plain}
\bibliography{main}

\end{document}